\documentclass[12pt]{article}
\usepackage{a4wide, amstext, amsmath, amssymb, graphicx, array, epsfig, psfrag}
\usepackage{amstext, amsmath, amssymb, graphicx}
\usepackage{todonotes}

\def\IR{\mathbb R}
\def\IC{\mathbb C}

\newcommand{\RR}{\mathbb{R}}

\newcommand{\bfn}{\boldsymbol n}
\newcommand{\bfI}{\boldsymbol I}
\newcommand{\bfP}{\boldsymbol P}
\newcommand{\bfx}{\boldsymbol x}

\newcommand{\bfp}{\boldsymbol p}

\newcommand{\bfB}{\boldsymbol B}

\newcommand{\bfkappa}{\boldsymbol \kappa}

\newcommand{\mcV}{\mathcal{V}}
\newcommand{\mcK}{\mathcal{K}}

\newcommand{\mcN}{\mathcal{N}}
\newcommand{\mcF}{\mathcal{F}}

\newcommand{\tn}{|\mspace{-1mu}|\mspace{-1mu}|}

\numberwithin{equation}{section}
\newtheorem{lem}{Lemma}[section]

\newtheorem{thm}{Theorem}[section]
\newtheorem{rem}{Remark}[section]
\newtheorem{cor}{Corollary}[section]
\newcommand{\jump}[1]{[\![#1]\!]}

\newenvironment{proof}{\noindent \newline {\bf Proof.}}
{\hfill \mbox{\fbox{} } \newline}
\newcommand{\nablas}{\nabla_\Sigma}
\newcommand{\nablash}{\nabla_{\Sigma_h}}

\newcommand{\Ps}{{\boldsymbol P}_\Sigma}
\newcommand{\Psh}{{\boldsymbol P}_{\Sigma_h}}
\newcommand{\bfPs}{{\boldsymbol P}_\Sigma}
\newcommand{\bfPsh}{{\boldsymbol P}_{\Sigma_h}}

\newcommand{\ds}{d \sigma}
\newcommand{\dsh}{d \sigma_h}



\begin{document}
\title{\bf A Stable Cut Finite Element Method for Partial Differential
  Equations on Surfaces: The Helmholtz-Beltrami Operator}
\author{Erik Burman\footnote{Department of Mathematics, University College London, London, UK--WC1E 6BT, United Kingdom} \mbox{ }
 Peter Hansbo\footnote{Department of Mechanical Engineering, J\"onk\"oping University,
SE-55111 J\"onk\"oping, Sweden} \mbox{ }
Mats G.\ Larson\footnote{Department of Mathematics and Mathematical Statistics,
  Ume{\AA} University, SE-90187 Ume{\aa}, Sweden} \mbox{ }
Andr\'e Massing\footnote{Department of Mathematics and Mathematical Statistics,
  Ume{\AA} University, SE-90187 Ume{\aa}, Sweden} 
}
\numberwithin{equation}{section} \maketitle
\begin{abstract}
We consider solving the surface Helmholtz equation on a smooth two dimensional 
surface embedded into a three dimensional space meshed with tetrahedra. 
The mesh does not respect the surface and thus the surface cuts through the 
elements.
We consider a Galerkin method based on using the restrictions of continuous 
piecewise linears defined on the tetrahedra to the surface as trial and test functions.

Using a stabilized method combining Galerkin least squares
stabilization and a penalty on the gradient jumps we obtain stability
of the discrete formulation under the condition $h k < C$, where $h$
denotes the mesh size, $k$ the wave number and $C$ a constant
depending mainly on the surface curvature $\kappa$, but not on the surface/mesh intersection.
Optimal error estimates in the $H^1$ and $L^2$-norms follow.
\end{abstract}

\section{Introduction}
In a previous paper~\cite{BHL13}
we considered solving the Laplace-Beltrami problem on a smooth two 
dimensional surface imbedded into a three dimensional space partitioned 
into a mesh consisting of shape regular tetrahedra. The mesh did not 
respect the surface and thus the surface can cut through the elements
in an arbitrary manner.
Following Olshanskii, Reusken, and Grande~\cite{OlReGr09} we constructed a Galerkin method 
by using the restrictions of continuous piecewise linears defined on the tetrahedra 
to the surface.

To alleviate the ill-conditioning of the resulting method we proposed
to add a stabilization term penalizing the jump of the gradient of the
solution to the formulation. The objective of the present work is to
show that in the case of indefinite elliptic problems a similar
stabilization improves the stability of the formulation yielding
discrete wellposedness under a weaker condition on the mesh parameter
and the wave number than is usually expected. The analysis draws on
ideas from \cite{ZhuBuWu16, SwiXX, Wu14} for the stabilization of the
Helmholtz equation. 

The analysis of vibrations and acoustics of thin structures is an
important topic in computational mechanics. Herein we consider, as a
model problem, the surface Helmholtz equation, i.e. the Helmholtz
equation defined using a Laplace-Beltrami operator on the
surface. This problem has many of the difficulties encountered when
using more complex structural models, but is also interesting in its
own right as a model for lateral acoustics in thin
structures. Typically the finite element analysis of the wave equation
in the frequency domain introduces
conditions on the size of the meshsize $h$ compared to the wavenumber~$k$.
For a standard Galeking finite element method of indefinite elliptic
problems, the standard condition that $h k^2$ has to be small, for
stability and
optimal estimates, is
obtained following Schatz \cite{Schatz74}, using the combination of an $H^1$ error estimate by G\aa
rdings inequality and a duality argument showing that the $L^2$-norm
error converges at a faster rate than that measured in the
$H^1$-norm. Thanks to the stabilization the mesh-wavenumber condition takes the form
$hk$ small instead. This condition appears here only
because of the discrete approximation of the surface. Our estimates are explicit in the mesh size and
the wave number, but not in the surface curvature, which we assume
is moderate. The conformity error
introduced due to the approximation of the surface also leads to a
condition on $h$. To simplify the presentation we will assume that 
$k \geqslant 1$ and $h<1$. Generic constants~$C$ may depend on the surface
curvature, but not on the wavenumber, the mesh-size or the
intersection of the surface with the computational mesh.  In cases where we want to highlight a particular
dependence, we add a subscript to the constant.

The outline of the reminder of this paper is as follows: In Section 2
we formulate the model problem and the finite element method, in
Section 3 we prove a priori error estimates, and finally in Section 4
we present numerical investigations confirming our theoretical
results.

\section{Model Problem and Finite Element Method}

\subsection{The Continuous Problem}

Let $\Sigma$ be a smooth two-dimensional closed and orientable surface embedded in ${{\IR}}^3$ with signed 
distance function $b$. We consider the following problem: for a given $k \in \mathbb{R}$, find $u: \Sigma \rightarrow {{\IC}}$ such that
\begin{align}\label{eq:LB}
-\Delta_\Sigma u - k^2 u = f \quad \text{on $\Sigma$}.
\end{align}
Here $\Delta_\Sigma$ is the Laplace-Beltrami operator defined by
\begin{equation}
\Delta_\Sigma = \nabla_\Sigma \cdot \nabla_\Sigma
\end{equation}
where $\nabla_\Sigma$ is the tangent gradient
\begin{equation}
\nabla_\Sigma = \bfPs \nabla
\end{equation}
with $\bfPs = \bfPs(\bfx)$ the projection of $\IR^3$ onto the tangent plane of $\Sigma$
at $\bfx\in\Sigma$, defined by
\begin{equation}
\bfPs = \bfI -\bfn\otimes\bfn
\end{equation}
where $\bfn = \nabla b$ denotes the exterior normal to $\Sigma$ at $\bfx$, $\bfI$ is the identity 
matrix, and $\nabla$ the ${{\IR}}^3$ gradient. 

The corresponding weak statement takes the form: find $u \in H^1(\Sigma)$
such that
\begin{equation}\label{weak_Helmholtz}
a(u,v) = l(v) \quad \forall v \in H^1(\Sigma)
\end{equation}
where
\begin{equation}
a(u,v) = (\nabla_\Sigma u, \nabla_\Sigma v)_\Sigma - (k^2 u,v)_\Sigma, \quad l(v) = (f,v)_\Sigma
\end{equation}
and $(v,w)_\Sigma = \int_\Sigma v \overline w$ is the $L^2$ inner product. We
will assume that $k\in \mathbb{R}$ is such that 
the Fredholm alternative yields a unique
solution of the problem.  Assuming that the following
bound holds on the smallest distance to an eigenvalue of
$\Delta_\Sigma$,
\begin{equation}\label{k2bound}
\min_i |\lambda_i - k^2| \geqslant c k
\end{equation}
we have the following elliptic regularity estimate:
\begin{equation}\label{regularity}
k^{-1} | u |_{2,\Sigma} + |u|_{1,\Sigma} + \|k u\|_{\Sigma} \leqslant C \|f\|_\Sigma.
\end{equation}
Here $\| w \|^2_\Sigma = (w,w)_\Sigma$ denotes the $L^2$ norm on $\Sigma$ and 
\begin{equation}
\| w \|^2_{m,\Sigma} = \sum_{s=0}^m \| ( \otimes_{i = 1}^s \nabla_\Sigma ) w \|_\Sigma^2
\end{equation} 
is the Sobolev norm on $\Sigma$ for $m=0,1,2$, where the $L^2$ norm for a matrix 
is based on the pointwise Frobenius norm. The constant in the above
estimate depends on the curvature of the surface. The following $L^2$-estimate is a
consequence of the Fredholm's 
alternative under the assumption \eqref{k2bound}:
\begin{equation}
\|u\|_{\Sigma} \leqslant  C \max_i |\lambda_i - k^2|^{-1} \|f\|_\Sigma \leq
C k^{-1} \|f\|_\Sigma.
\end{equation}
Using the equation we also immediately obtain a bound of the
$H^1$-norm of $u$
\begin{equation}
 \|\nabla_{\Sigma} u\|_{\Sigma}^2  = (f,u) + k^2 \|u\|_{\Sigma}^2\leqslant C (k^{-1} +
 1) \|f\|^2_{\Sigma}.
\end{equation}
The $H^2$-estimate, finally, is a consequence of the elliptic regularity
of the Laplace-Beltrami operator, $|u|_{2,\Sigma} \leqslant C
\|\Delta_{\Sigma} u\|_{\Sigma}$, see \cite{Aub82}, and the fact that
$
\Delta_\Sigma u = -f - k^2 u
$
implying that
\begin{equation}
\|\Delta_\Sigma u\|_\Sigma \leqslant \|f\|_\Sigma + k^2 \|u\|_\Sigma \leqslant C (1+k) \|f\|_\Sigma.
\end{equation}
\begin{rem}
The assumption \eqref{k2bound} can be checked in special cases such as
for the sphere. In that case $\lambda_i = i (i+1)$, $i=1,2,\hdots$ and
we can see that a moderately small $c$,
for instance $c=0.1$ allows for an important range of values of
$k^2$. The behavior of the method for values of $k^2$ close to an
eigenvalue is explored in section \ref{sec:stab_close_eigenval}.
\end{rem}

\subsection{The Finite Element Method on $\boldsymbol{\Sigma}$}\label{Sec:FEM}

Let $\mcK$ be a quasi uniform partition into shape regular tetrahedra of a domain $\Omega$ in $\IR^3$ completely containing $\Sigma$. Let $\mcK_h$ be the set of tetrahedra that intersect $\Sigma$ and denote by~$\Omega_h$ the domain covered by $\mcK_h$; that is,
\begin{equation}
\mcK_h = \{ K \in \mcK : K\cap \Sigma \neq \emptyset \}, \quad \Omega_h = \cup_{K \in \mcK_h} K.
\end{equation}
We denote the local mesh size by $h_K$ and define the global mesh size $h =\max_{K\in\mcK_h} \{h_K\}$.
Since $h_K \sim h$ by the quasi uniformity of $\mcK$, we will simply use $h$ throughout the remaining work.
We let $\mcV_h$ be the space of continuous
piecewise linear, complex valued, polynomials defined on $\mcK_h$. Our finite element method takes the form:
find  $\tilde u_h \in \mcV_h$ such that
\begin{equation}\label{semi_FEM}
A(\tilde u_h,v) +\gamma_j  j(\tilde u_h,v)= l_s(v) \quad \forall v \in \mcV_h
\end{equation}
where the bilinear form $A(\cdot,\cdot)$ is defined by
\begin{equation}
A(v,w) = a(v,w) + \gamma_s s(v,w)\quad \forall v,w \in \mcV_h
\end{equation}
with the stabilization terms
\begin{equation}
s(v,w) = \sum_{K \in \mcK_h} h^2 (\Delta_\Sigma v + k^2 v,
\Delta_\Sigma w + k^2 w)_{\Sigma \cap K}
\end{equation}
and
\begin{equation}
j(v,w) = \sum_{F \in \mcF_I} ([\bfn_F \cdot \nabla v],[\bfn_F \cdot \nabla w])_F.
\end{equation}
Above $\mcF_I$ denotes the set of internal interfaces in $\mcK_h$,
$[\bfn_F \cdot \nabla v] = (\bfn_F \cdot \nabla v)^+ 
- (\bfn_F \cdot \nabla v)^-$ with 
$w(\bfx)^\pm = \lim_{t\rightarrow 0^+} w(\bfx \pm t \bfn_F)$, is 
the jump in the normal gradient across the face $F$, and $\bfn_F$ 
denotes a fixed unit normal to the face $F$. For consistency the right
hand side is modified to read
\begin{equation}
 l_s(v) = l(v) - \sum_{K \in \mcK_h} \gamma_s h^2 (f,
\Delta_\Sigma v + k^2 v)_{\Sigma \cap K}.
\end{equation}
The parameter $\gamma_x \in \mathbb{C}$, $x=s,j$ will be assumed to satisfy
$\mbox{Im}(\gamma_x) > 0$. To simplify the presentation and without loss
of generality we will also assume that $Re(\gamma_x)=0$ below.

\subsection{Approximation of the Surface}

Next, we recall that for a smooth oriented surface $\Sigma$,
there is an open $\delta$ tubular neighborhood
$U_{\delta}(\Sigma) =\{ \bfx \in \RR^3 : |b(\bfx)| < \delta \} $ of
$\Sigma$ such that for each $\bfx \in U_{\delta}(\Sigma)$ there is a unique
closest point $\bfp(\bfx) \in \Sigma$
minimizing the Euclidean distance to $\bfx$. Note that
the closest point mapping $\bfx \mapsto \bfp(\bfx)$ satisfies
$\bfp(\bfx) = \bfx - b(\bfx) \bfn(\bfp(\bfx))$.
Using $\bfp$ we extend $u$ outside of
$\Sigma$ by defining
\begin{equation}
  \label{extension} u^e(\bfx) = u.
  \circ \bfp (\bfx)
\end{equation}
In the following, a superscript $e$ is also used to denote the
extension of other quantities defined on the surface.

In practice we are typically not able to compute on the exact surface $\Sigma$, instead 
we have to consider an approximate surface $\Sigma_h$. Depending on how the surface 
is described the construction of the approximate surface can be done in different ways. 
Here we consider, in particular, a simple situation where $\Sigma$ is described 
by a level set function $b$ and $\Sigma_h$ is defined by the zero level set to a 
piecewise linear approximate level set function $b_h \in \mbox{Re}(\mcV_h)$.
In this case the approximate 
surface is a piecewise linear surface since it is the level set to a piecewise 
linear function.
We let the approximate normal $\bfn_h$ be the exact normal to the 
piecewise linear approximate surface $\Sigma_h$.
and that the following estimates hold
\begin{equation}
\| b \|_{L^\infty(\Sigma_h)} \leqslant C h^2, 
\qquad \| \bfn^e - \bfn_h \|_{L^\infty(\Sigma_h)} \leqslant C h.
\end{equation}
These properties are, for instance, satisfied if $b_h$ is the Lagrange interpolant of $b$.
Observe that by the properties of the interpolant the discrete interface $\Sigma_h$ is also contained in $\mcK_h$.
Finally, we define the lift $v^l$ of a function $v$ defined on discrete surface $\Sigma_h$
to the exact surface $\Sigma$ by requiring that
\begin{equation}
  \label{eq:lift}
  (v^l)^e = v^l \circ \bfp = v.
\end{equation}
We refer to Figure~\ref{fig:domain-set-up} for an illustration of the relevant geometric concepts. 
\begin{figure}[htb]
  \begin{center}
    \includegraphics[width=0.45\textwidth]{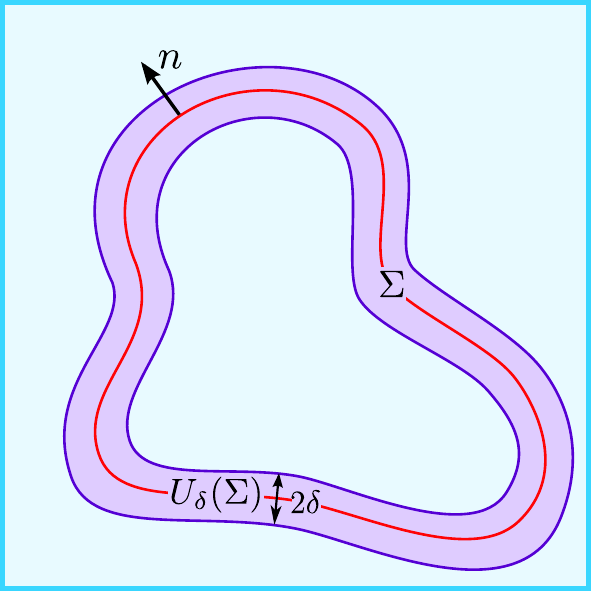}
    \hspace{0.03\textwidth}
    \includegraphics[width=0.45\textwidth]{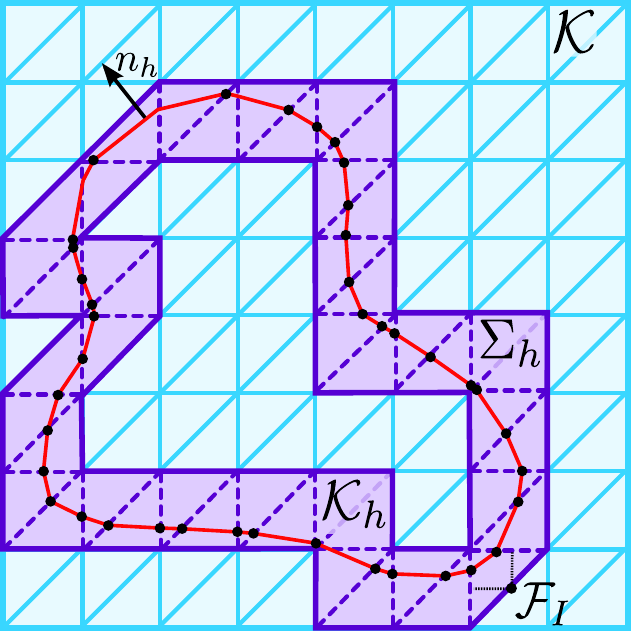}
  \end{center}
  \caption{Set-up of the continuous and discrete domains. (Left) Continuous surface $\Sigma$
  enclosed by a $\delta$ tubular neighborhood $U_{\delta}(\Sigma)$.
  (Right) Discrete manifold $\Sigma_h$ embedded into a background mesh
  $\mcK$ from which the active mesh $\mcK_h$ is extracted.
  }
  \label{fig:domain-set-up}
\end{figure}

\subsection{The Finite Element Method on $\boldsymbol{\Sigma}_{\boldsymbol{h}}$}

Here let
\begin{equation}
\mcK_h = \{ K \in \mcK : K \cap \Sigma_h \neq \emptyset \}, \quad \Omega_h = \cup_{K \in \mcK_h} K
\end{equation}
and $\mcV_h$ be the continuous piecewise linear, complex valued functions defined on $\mcK_h$. The 
finite element method on $\Sigma_h$ takes the form: find $u_h \in \mcV_h$ such that
\begin{equation}\label{full_FEM}
A_h(u_h,v)  + \gamma_j j(u_h,v) = l_h(v) \quad \forall v \in \mcV_h.
\end{equation}
The bilinear form $A_h(\cdot, \cdot)$ is defined by
\begin{equation}
A_h(v,w) = a_h(v,w) +  \gamma_s s_h(v,w)\quad \forall v,w \in \mcV_h
\end{equation}
with
\begin{equation}
a_h(v,w) = (\nabla_{\Sigma_h} v, \nabla_{\Sigma_h} w)_{\Sigma_h} -
(k^2 v, w)_{\Sigma_h}
\end{equation}
and 
\begin{equation}
s_h(v,w) = \sum_{K \in \mcK_h}  h^2 (\Delta_{\Sigma_h} v + k^2
v,\Delta_{\Sigma_h} w + k^2 w)_{\Sigma_h \cap K}
\end{equation}
where the tangent gradients are defined using the normal to the discrete 
surface
\begin{equation}
\nabla_{\Sigma_h} v 
= \bfPsh \nabla v = ( \bfI - \bfn_h \otimes \bfn_h ) \nabla v.
\end{equation}
The form on the right hand side $l_h(\cdot)$ is given by
\begin{equation}
l_h(v) 
= ( f^e, v )_{\Sigma_h}- \sum_{K \in \mcK_h} \gamma_s h^2 (f^e,\Delta_{\Sigma_h} w  +k^2 w)_{\Sigma_h \cap K}.
\end{equation}
Observe that since the level set function $b_h$ is piecewise linear and
defined on $\mcV_h$, $\Delta_{\Sigma_h} v\vert_{K \cap \Sigma_h} =
0$. Therefore the
stabilization term and the right hand side reduces to
\begin{equation}
s_h(v,w) =  \sum_{K \in \mathcal{K}_h} h^2 (\Delta_{\Sigma_h} v + k^2 v,\Delta_{\Sigma_h} w +
k^2 w)_{\Sigma_h} = (h^2 k^2 v,
k^2 w)_{\Sigma_h}
\end{equation}
and 
\begin{equation}
l_h(v) = ( f^e, v )_{\Sigma_h}- \sum_{K \in \mcK_h} \gamma_s h^2 (f^e,\Delta_{\Sigma_h} v +
k^2 v)_{\Sigma_h \cap K} = ( f^e, v - \gamma_s h^2 k^2 v)_{\Sigma_h} 
\end{equation}
We notice that these simplifications allow us to write the following
formulation which is suitable for implementation: find $u_h \in
\mcV_h$ such that
\begin{equation}
(\nabla_{\Sigma_h} u_h, \nabla_{\Sigma_h} v)_{\Sigma_h} -
(k^2(1-\gamma_s h^2) u_h, v)_{\Sigma_h} + \gamma_j j(u_h,v) = (f_e,
(1-\gamma_s h^2k^2) v)_{\Sigma_h}\quad \forall v \in \mcV_h.
\end{equation}
Since this weakly consistent stabilization actually is a norm on $u_h$,
one may prove that the system is invertible for
all $h$ as follows. Take $v = u_h$ in \eqref{full_FEM} and
take the imaginary part of the equation to obtain
\begin{equation}
\mbox{Im}(\gamma_s) h^2 \| k u_h\|^2_{\Sigma_h} \leqslant \|f\|_{\Sigma_h} (1
+ |\gamma_s| (hk)^2) \|u_h\|_{\Sigma_h}.
\end{equation}
Therefore
\begin{equation}
\|u_h\|_{\Sigma_h} \leqslant \mbox{Im}(\gamma_s)^{-1} ((hk)^{-2} + 1) \|f\|_{\Sigma_h}.
\end{equation}
As we shall see below, the lack of consistency introduces some
additional constraints on the stabilization parameters.

The penalty on the gradient jumps is necessary to obtain
robustness in the semi-discrete case, but also to control the
conformity error of the stabilizing terms in the fully discrete case. We recall the following
key result from \cite{BHL13}.
\begin{lem}\label{grad_cont}
There exists $C>0$ so that for all $v_h \in \mcV_h$ there holds
\begin{equation}
h \|\nabla v_h\|_{\Sigma_h} \leqslant C( h \|\nabla_{\Sigma_h} v_h\|_{\Sigma_h}
+ j(v_h,v_h)^{\frac12}).
\end{equation}
\end{lem}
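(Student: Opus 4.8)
The plan is to separate the full gradient into its tangential and normal components relative to $\Sigma_h$, observe that the tangential part is exactly the first term on the right, and then spend all the effort controlling the normal derivative by the gradient-jump penalty together with the geometry of the closed surface. First I would use the pointwise orthogonal splitting on $\Sigma_h$,
\[
\nabla v_h = \nabla_{\Sigma_h} v_h + (\bfn_h \cdot \nabla v_h)\,\bfn_h,
\]
which, since $\bfn_h$ is a unit vector, gives $|\nabla v_h|^2 = |\nabla_{\Sigma_h} v_h|^2 + |\bfn_h \cdot \nabla v_h|^2$ almost everywhere on $\Sigma_h$. The claim then reduces to the single estimate $h\,\|\bfn_h \cdot \nabla v_h\|_{\Sigma_h} \leqslant C\bigl(h\,\|\nabla_{\Sigma_h} v_h\|_{\Sigma_h} + j(v_h,v_h)^{\frac12}\bigr)$, that is, to controlling only the normal derivative of $v_h$ on the surface.

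Next I would pass from the cut surface to the surrounding band $\Omega_h$, which is the device for coping with arbitrarily small cuts. Since $v_h$ is piecewise linear, $\nabla v_h$ is constant on each $K$, and a trace inequality of Hansbo--Hansbo type — uniform in the position of the cut once $h$ is small enough that $\Sigma_h \cap K$ is quasi-planar, using the bounded curvature of $\Sigma$ — yields $\|\nabla v_h\|_{\Sigma_h \cap K}^2 \leqslant C h^{-1}\|\nabla v_h\|_K^2$, and after summation $h^2\|\nabla v_h\|_{\Sigma_h}^2 \leqslant C h\,\|\nabla v_h\|_{\Omega_h}^2$. It therefore suffices to establish the volume estimate $h\,\|\nabla v_h\|_{\Omega_h}^2 \leqslant C\bigl(h^2\|\nabla_{\Sigma_h} v_h\|_{\Sigma_h}^2 + j(v_h,v_h)\bigr)$, in which the badly cut elements no longer appear through a vanishing surface measure.

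The main obstacle is this volume estimate, and it is exactly where the penalty $j$ is indispensable. On elements with a substantial intersection, $|\Sigma_h \cap K| \sim h^2$, so the surface norm already controls the constant gradient $\nabla v_h|_K$; on arbitrarily cut elements it does not, and one must transfer control from well-cut neighbours through chains of shared faces. Because $v_h$ is continuous, the gradient jump across a face $F$ is purely normal to $F$ and equals $[\bfn_F \cdot \nabla v_h]$, so the accumulated variation of the elementwise gradient along such a chain is bounded by $j(v_h,v_h)^{\frac12}$; this is the ghost-penalty mechanism that renders the constant independent of the surface--mesh intersection. The subtle point I would flag is that the normal component can never be bounded on a single element by the tangential gradient there — the globally linear test function $v_h = \bfa \cdot \bfx$, for which $j(v_h,v_h)=0$ yet $\bfn_h\cdot\nabla v_h \neq 0$, shows this — so the anchoring must be global: since $\Sigma$ is closed with bounded curvature, the discrete normals $\bfn_h$ sweep all directions, and the tangential gradients over a neighbourhood of elements with distinct normals jointly control every Cartesian component of $\nabla v_h$. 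Combining the jump-based transfer between elements with this geometric spread of normals, and replacing $\bfn_h$ by the smooth exact normal $\bfn^e$ (an $O(h)$ perturbation that is absorbed into $C$), closes the volume estimate and hence the lemma, with a constant depending on the curvature of $\Sigma$ but not on how $\Sigma_h$ meets the mesh.
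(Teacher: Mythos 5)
You should first be aware of what you are being measured against: the paper does not prove this lemma at all --- its entire ``proof'' is the sentence ``Identical to the proof of Lemma 3.2 of \cite{BHL13}'', so the mathematical content lives in that reference. Parts of your skeleton are sound and do match the structure one needs: the orthogonal splitting $\nabla v_h = \nabla_{\Sigma_h} v_h + (\bfn_h\cdot\nabla v_h)\bfn_h$; the elementwise trace inequality for the piecewise constant field $\nabla v_h$ (for constants it is just $|\Sigma_h\cap K|\leqslant C h^{-1}|K|$); the reduction to the volume estimate $h\|\nabla v_h\|^2_{\Omega_h}\leqslant C\bigl(h^2\|\nabla_{\Sigma_h} v_h\|^2_{\Sigma_h}+j(v_h,v_h)\bigr)$, which is precisely the inequality the paper later imports from \cite{BHL13} in the proof of Lemma \ref{cont_lem2}; the fact that for continuous piecewise linears $\jump{\nabla v_h}=\jump{\bfn_F\cdot\nabla v_h}\,\bfn_F$, so that $j$ controls the full inter-element variation of the gradient; and the recognition, via the example $v_h=\bfa\cdot\bfx$, that no local argument can succeed and the closedness of $\Sigma$ must enter.

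The gap is that your final sentence --- ``combining the jump-based transfer between elements with this geometric spread of normals \ldots closes the volume estimate'' --- is not a proof step; it is the entire difficulty, and as set up in your text it would not close. Two concrete problems. First, your claim that on a fat-cut element ``the surface norm already controls the constant gradient $\nabla v_h|_K$'' is false: the right-hand side of the volume estimate contains only the \emph{tangential} surface norm, which never sees $\bfn_h\cdot\nabla v_h$ no matter how large $|\Sigma_h\cap K|$ is; you contradict this yourself two sentences later. So the fat/thin-cut dichotomy with short ghost-penalty chains, which is the correct mechanism for bulk fictitious-domain problems, is not the mechanism here --- the enemy is the normal direction, not the cut position. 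Second, the global anchoring you then invoke is necessarily long-range and cannot be run elementwise. Since the normal of a smooth surface turns by only $O(h)$ per element, an element whose tangent plane sees the direction $\bfn_h|_K$ lies at surface distance $O(1)$, i.e.\ $O(h^{-1})$ faces away; moreover no pointwise bound exists at all: for $v_h=\pi_h b$ (the interpolant of the distance function) one has $|\bfn_h\cdot\nabla v_h|\approx 1$ on \emph{every} element while $|\nabla_{\Sigma_h} v_h|=O(h)$ and every individual jump is $O(h)$, so the claimed inequality holds only after integration over all of $\Sigma_h$, the normal derivative being the accumulation of $\sim h^{-1}$ jumps of size $h$. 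A correct argument therefore has to be an integral one: for each $K$ one must distribute the anchoring over the $\sim h^{-1}$ elements with transverse normal (anchoring all elements to a single transverse element already makes the tangential term fail, since one element's tangential gradient weighted by the whole surface area is not bounded by $\|\nabla_{\Sigma_h}v_h\|^2_{\Sigma_h}$), and one must verify that the resulting $O(h^{-3})$ chains of length $O(h^{-1})$ traverse each face at most $O(h^{-2})$ times, since otherwise the accumulated jump terms are not bounded by $C\,j(v_h,v_h)$ but by a positive power of $h^{-1}$ times it. None of this bookkeeping appears in your proposal: you have correctly identified the obstruction, but the step that overcomes it --- the actual content of the proof cited from \cite{BHL13} --- is missing.
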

\begin{proof}
Identical to the proof of Lemma 3.2 of \cite{BHL13}.
\end{proof}
\section{A Priori Error Estimates}
For the a priori error analysis we will follow the framework for the
analysis of stabilized finite element methods for the Helmholtz
equation proposed in \cite{SwiXX}. In order to estimate the error
induced by approximating the equations on an approximate surface we
need to first recall a number of technical results regarding the
mapping from the approximate to the exact surface and the bounds on
the error committed when changing the domain of integration. For
detailed proofs, we refer
to~\cite{Dziuk1988,OlReGr09,BHL13}. We
also recall some approximation error estimates.

\subsection{Geometric Estimates}
First we recall how the tangential gradient of lifted and extended functions can be computed and
how the surface measure changes under lifting.
Starting with the Hessian of the signed distance function
\begin{align}
  \bfkappa = \nabla \otimes \nabla b \quad \text{on }
  U_{\delta_0}(\Sigma)
\end{align}
the derivative of the closest point projection 
and of an extended function $v^e$ is given by
\begin{gather}
Dp = \Ps (I - b \bfkappa) = \Ps - b \bfkappa
\label{eq:derivative-closest-point-projection}
\\
  Dv^e = D(v \circ \bfp) = Dv Dp = Dv \bfP_{\Sigma}(I - b \bfkappa).
\label{eq:derivative-extended-function}
\end{gather}
The self-adjointness of $\Ps$, $\Psh$, and $\bfkappa$,
and the fact that $ \Ps \bfkappa = \bfkappa = \bfkappa \Ps$
and $\Ps^2 = \Ps$
leads to the identity
\begin{align}
  \nablash v^e &= \Psh(I - b \bfkappa)\Ps \nabla v = \bfB^{T} \nablas v
  \label{eq:ve-tangential-gradient}
\end{align}
where $\bfB$ denotes the invertible linear application
\begin{align}
  \bfB = \bfP_{\Sigma}(I - b \bfkappa) \bfP_{\Sigma_h}: T_x(\Sigma_h) \to T_{\bfp(x)}(\Sigma)
  \label{eq:B-def}
\end{align}
mapping the tangential space of $\Sigma_h$ at $x$ to the tangential space of $\Sigma$ at
$\bfp(x)$. Setting $v = w^l$ and using the identity $(w^l)^e = w$, we immediately get that
\begin{align}
  \nablas w^l = \bfB^{-T} \nablash w
\end{align}
for any elementwise differentiable function $w$ on $\Sigma_h$ lifted to $\Sigma$.
We recall from \cite[Lemma 14.7]{GilbargTrudinger2001}
that for $x\in U_{\delta_0}(\Sigma)$, the Hessian $\bfkappa$
admits a representation
\begin{equation}\label{Hform}
  \bfkappa(x) = \sum_{i=1}^d \frac{\kappa_i^e}{1 + b(x)\kappa_i^e}a_i^e \otimes a_i^e
\end{equation}
where $\kappa_i$ are the principal curvatures with corresponding
principal curvature vectors $a_i$.
Thus
\begin{equation}
  \|\bfkappa\|_{L^\infty(U_{\delta_0}(\Sigma))} \leqslant C 
  \label{eq:Hesse-bound}
\end{equation}
for $\delta_0 > 0$ small enough and as a consequence
the following bounds for the linear
operator $\bfB$ can be derived:
  \begin{equation}
    \| \bfB \|_{L^\infty(\Sigma_h)} \leqslant C ,
    \quad \| \bfB^{-1} \|_{L^\infty(\Sigma)} \leqslant C,
    \quad
    \| \bfP_\Sigma - \bfB \bfB^T \|_{L^\infty(\Sigma)} \leqslant C  h^2.
    \label{eq:BBTbound}
  \end{equation}

Next, we recall that the surface measure $\ds$ on $\Sigma$ is related to the surface measure $\dsh$ 
on $\Sigma_h$ by the identity  
\begin{equation}
d \sigma = |\bfB | d \sigma_h
\end{equation}
where $|\bfB|$ is the determinant of $\bfB$ which is given by
\begin{equation}
|\bfB| = \Pi_{i=1}^2 (1 - b \kappa_i^e) \bfn^e \cdot \bfn_h.
\end{equation}
Using this the following estimates for the determinant can be proved,
\begin{equation}\label{detBbounds}
\|\, |\bfB|\,\|_{L^\infty(\Sigma_h)} \leqslant C,
\quad
\|\, |\bfB|^{-1} \,\|_{L^\infty(\Sigma_h)} \leqslant C,
\quad
 \|1 - |\bfB| \|_{L^\infty(\Sigma_h)} \leqslant C h^2.
\end{equation}

\subsection{Interpolation Error Estimates}

In order to define an interpolation operator we note
that thanks to the coarea-formula 
\begin{align*}
\int_{U_{\delta}} f(x) \,dx = \int_{-\delta}^{\delta} 
\left(\int_{\Sigma(r)} f(y,r) \, \mathrm{d} \Sigma_r(y)\right)\,\mathrm{d}r
\end{align*}
see, e.g., \cite{EvansGariepy1992}, the extension $v^e$ of $v \in H^s(\Sigma)$ satisfies the stability estimate
\begin{equation}\label{eq:ext_stab}
\| v^e \|_{s,\Omega_h} \leqslant C h^{\frac12}  \| v \|_{s,\Sigma},\quad s=0,1,2.
\end{equation}
For $h$ sufficiently small the constant in the inequality \eqref{eq:ext_stab} 
depends only on the curvature of the surface $\Sigma$. The above dependence on $h$ 
can be obtained by mapping $\Omega_h$ to some reference 
shell where both the diameter and the thickness are fixed. On this domain the standard 
result for extension operators $\| E v \|_{s,\Omega_h} \leqslant C \| v \|_{s,\Sigma}$ holds 
and \eqref{eq:ext_stab} follows by scaling back to the physical domain noting that the thickness, in the direction normal to $\Sigma$, of $\Omega_h$ is $O(h)$. 

We let $\pi_h: L^2(\Omega_h) \rightarrow \mcV_h|_{\Sigma_h}$ denote the standard 
Scott-Zhang interpolation operator and recall the interpolation error estimate
\begin{equation}\label{interpolstandard}
\| v - \pi_h v \|_{m,K} \leqslant C h^{2-m} \| v \|_{2,\mcN(K)}, \quad m = 0,1,2
\end{equation}
where $\mcN(K)\subset \Omega_h$ is the union of the neighboring elements of $K$. We 
also define an interpolation operator 
$\pi_h^l:L^2(\Sigma) \rightarrow (\mcV_h|_{\Sigma_h})^l$ 
as follows
\begin{equation}\label{pihl}
\pi_h^l v =  ( (\pi_{h} v^e) |_{\Sigma_h})^l.
\end{equation}

Introducing the energy norm $\tn \cdot \tn_\Sigma$ associated with the 
exact surface and the energy norm $\tn \cdot \tn_{\mcF}$  associated with the 
jump terms
\begin{equation}
\tn v \tn^2_{\Sigma,k} = \|\nabla_{\Sigma} v\|^2_{\Sigma}+\| k v\|^2_{\Sigma}, \quad \tn v
\tn_{\mcF}^2 = j(v,v), \quad \tn v \tn_{GLS}^2 = s(v,v).
\end{equation}
From the results of \cite{BHL13} we deduce approximation results
needed in the analysis.
\begin{lem}\label{lem:approx} The following estimates hold
\begin{equation}\label{interpol}
\tn u - \pi_h^l u \tn_{\Sigma,k}^2 + \tn u^e - \pi_h u^e \tn^2_{\mcF} + \sum_K \|h^{-\frac12} (u - \pi_h^l u)\|^2_{\partial K
  \cap \Sigma} \leqslant C h^2 (1 + h^4 k^4) | u |^2_{2,\Sigma}
\end{equation}
\begin{equation}\label{interp_GLS}
\tn u - \pi_h^l u \tn^2_{GLS} \leqslant C h^2 (1 + h^4 k^4)   \| u \|^2_{2,\Sigma}
\end{equation}
and, 
\begin{equation}\label{pihlbound}
\tn \pi_h u^e \tn_{\mcF}^2 + h^2 \tn \pi_h^l u \tn_{\Sigma,k}^2 + \tn
\pi_h^l u \tn_{GLS}^2\leqslant C h^2  ( (1 + h^4 k^4)\| u\|^2_{2,\Sigma}+ \|f\|^2_{\Sigma}).
\end{equation}
\end{lem}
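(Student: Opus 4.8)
The plan is to deduce all three estimates from the corresponding $k$-free interpolation bounds established in~\cite{BHL13}, tracking carefully the additional $k$-weighted contributions. The starting observation is that, since $u^e$ is constant along normals, $u = (u^e|_{\Sigma_h})^l$, so that the lifted error satisfies $u - \pi_h^l u = ((u^e - \pi_h u^e)|_{\Sigma_h})^l$. Hence every term posed on the exact surface can be pulled back to $\Sigma_h$: using $d\sigma = |\bfB|\,d\sigma_h$ together with the determinant bounds~\eqref{detBbounds}, the identity $\nablas w^l = \bfB^{-T}\nablash w$, and the operator bounds~\eqref{eq:BBTbound}, the tangential energy norm $\tn u - \pi_h^l u\tn_{\Sigma,k}$ is controlled by the interpolation error of $u^e$ measured on $\Sigma_h$. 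One then removes the surface by a trace inequality of the form $\|w\|^2_{\Sigma_h\cap K}\le C(h^{-1}\|w\|^2_K + h\|\nabla w\|^2_K)$, applies the Scott--Zhang estimate~\eqref{interpolstandard} elementwise (noting $D^2\pi_h u^e = 0$ on each $K$), and finally returns to $\Sigma$ through the extension stability~\eqref{eq:ext_stab}.

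For~\eqref{interpol} I would treat the three contributions separately. The gradient part $\|\nablas(u-\pi_h^l u)\|_\Sigma$, the jump seminorm $\tn u^e - \pi_h u^e\tn_\mcF$ (where the jump of $\nabla u^e$ vanishes, so only $\pi_h u^e$ contributes), and the element-boundary term are all $k$-free and can be taken directly from~\cite{BHL13}, each bounded by $C h^2 |u|^2_{2,\Sigma}$. The only genuinely new term is $\|k(u-\pi_h^l u)\|^2_\Sigma = k^2\|u-\pi_h^l u\|^2_\Sigma$; the $m=0$ case of the pull-back gives $\|u-\pi_h^l u\|^2_\Sigma \le C h^4|u|^2_{2,\Sigma}$, whence $k^2\|u-\pi_h^l u\|^2_\Sigma \le C h^4 k^2 |u|^2_{2,\Sigma}$, and $h^4 k^2 \le h^2(1+h^4k^4)$ by the arithmetic--geometric mean inequality. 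This is precisely what produces the factor $(1+h^4k^4)$.

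For~\eqref{interp_GLS} I would split $\tn u-\pi_h^l u\tn_{GLS}^2$ into a Laplacian part $\sum_K h^2\|\Delta_\Sigma(u-\pi_h^l u)\|^2_{\Sigma\cap K}$ and a zeroth-order part $h^2k^4\|u-\pi_h^l u\|^2_\Sigma$. The second is bounded by $C h^2 (h^4 k^4)|u|^2_{2,\Sigma}$ exactly as above. For the first, the crucial simplification is that $\Delta_{\Sigma_h}\pi_h u^e$ vanishes on each element (a piecewise linear function on the piecewise flat $\Sigma_h$), so that $\Delta_\Sigma\pi_h^l u$ reduces to lower-order curvature terms in $\nablash\pi_h u^e$; combined with $\|\Delta_\Sigma u\|_\Sigma\le C|u|_{2,\Sigma}$ this yields $\|\Delta_\Sigma(u-\pi_h^l u)\|_\Sigma\le C\|u\|_{2,\Sigma}$, hence the bound $C h^2\|u\|^2_{2,\Sigma}$ for the Laplacian part. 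I expect this step --- controlling the surface Laplacian of the lifted piecewise linear interpolant through the geometric identities --- to be the main technical obstacle, and it is exactly where the results recalled from~\cite{BHL13} do the heavy lifting.

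Finally,~\eqref{pihlbound} follows from the triangle inequality $\tn\pi_h^l u\tn \le \tn u-\pi_h^l u\tn + \tn u\tn$ in each of the three norms, together with the stability of the exact solution. The interpolation-error contributions are already controlled by~\eqref{interpol} and~\eqref{interp_GLS}, so it remains to bound the norms of $u$ itself. The key observation is that, by the PDE $\Delta_\Sigma u + k^2 u = -f$, the GLS seminorm of the exact solution is $\tn u\tn_{GLS}^2 = \sum_K h^2\|\Delta_\Sigma u + k^2 u\|^2_{\Sigma\cap K} = h^2\|f\|^2_\Sigma$, which supplies the $\|f\|^2_\Sigma$ term on the right-hand side. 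For the $h^2\tn u\tn_{\Sigma,k}^2$ contribution I would invoke the elliptic regularity estimate~\eqref{regularity}, which gives $\|ku\|_\Sigma\le C\|f\|_\Sigma$ and $\|\nablas u\|_\Sigma\le C\|f\|_\Sigma$, so that $h^2\tn u\tn^2_{\Sigma,k}\le C h^2\|f\|^2_\Sigma$. Collecting the terms yields~\eqref{pihlbound}.
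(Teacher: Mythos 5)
Your proposal is correct and follows essentially the same route as the paper: \eqref{interpol} is taken from the interpolation results of \cite{BHL13} with the extra $k$-weighted $L^2$ term absorbed via $h^4k^2 \leqslant h^2(1+h^4k^4)$; \eqref{interp_GLS} rests on the same key fact the paper uses (its identity \eqref{eq:Delta_bound}, i.e.\ that the surface Laplacian of the lifted piecewise-linear interpolant collapses to a curvature times normal-derivative term); and \eqref{pihlbound} is obtained by the same triangle inequality combined with the equation $\Delta_\Sigma u + k^2 u = -f$ (giving $\tn u \tn_{GLS}^2 \leqslant h^2\|f\|_\Sigma^2$) and the regularity estimate \eqref{regularity}. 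No gaps worth flagging.
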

\begin{proof}
The bound \eqref{interpol}
 follows immediately from
the approximation results of \cite{BHL13}. For \eqref{interp_GLS}
 we use the following relation
that follows from the arguments in \cite{LaLa13}: since $u_h$ is piecewise affine there
holds
\begin{equation}\label{eq:Delta_bound}
\Delta_\Sigma u_h^l\vert_{\Sigma \cap K}  = - \mbox{tr}( \kappa)
\nabla u_h^l \cdot n_{\Sigma}\vert_{\Sigma \cap K}, \forall K \in \mcK_h.
\end{equation}
To see this we write
\begin{multline*}
\Delta_\Sigma u_h^l = \nabla_{\Sigma} \cdot (I - n_{\Sigma} \otimes
n_{\Sigma}) \nabla u_h^l = \nabla_{\Sigma} \cdot  (n_{\Sigma}
n_{\Sigma}\cdot \nabla
u_h^l )  = -(\nabla \cdot n_\Sigma) n_{\Sigma}\cdot \nabla
u_h^l - \underbrace{ n_{\Sigma} \cdot (\nabla_\Sigma n_{\Sigma}) (\nabla u_h^l)}_{=0}
\end{multline*}
and the relation follows recalling that $\nabla \cdot n_\Sigma = \mbox{tr}( \kappa)$.
We may then use the triangle
inequality to obtain
\begin{multline*}
\sum_{K \in \mcK}\|\Delta (u - \pi_h^l u)+k^2  (u - \pi_h^l
u)\|_{\Sigma \cap K}^2 \leq C(\|\Delta_\Sigma u\|^2_\Sigma + \|\mbox{tr}( \kappa)
\nabla (\pi_h^l u_h - u^e) \cdot n_{\Sigma}\|_\Sigma^2+\tn u - \pi_h^l u \tn_{\Sigma,k}^2 )\\ \leq  C (1+ k^4 h^4) \|u\|_{2,\Sigma}^2.
\end{multline*}
To prove \eqref{pihlbound} we add and subtract $u$, use a triangle
inequality and apply \eqref{interpol} and \eqref{interp_GLS} and
finally observe that, using the regularity \eqref{regularity} and the
equation \eqref{eq:LB},
\[
 h^2 \tn u \tn_{\Sigma,k}^2+\tn u \tn_{GLS}^2 \leq C h^2  \|f\|_{\Sigma}^2.
\]
\end{proof}

\subsection{Error Estimates for the Semi Discretized Formulation}
We will first give an analysis for the semi-discretized method
\eqref{semi_FEM}. This is to show how the ideas of \cite{SwiXX} carries
over to the case of approximation of the Helmholtz equation on a
surface, without the technicalities introduced by the discretized surface.
The analysis is based on the observation that we have coercivity on
the stabilization terms that constitute a (very weak) norm on the
solution. In this norm we obtain an optimal error estimate. We then
proceed using duality to estimate the error in the $L^2$-norm,
independent of the error in energy norm. Then finally we estimate the
error in the energy norm. Since the two stabilization terms have
similar effect in this case we use the generic parameter $\gamma =
\gamma_s = \gamma_j$. To simplify the notation we assume that $hk$ is
bounded by some constant, so that higher powers can be
omitted. Observe however that we do not assume that $hk$ is ``small
enough'' here, which will be necessary when also the domain is
discretized in the next section.
We first prove a preliminary lemma that will
be useful in the following analysis.

\begin{lem}\label{cont_lem}(Continuity)
For all $v,w \in H^2(\Sigma)$, $v_h,w_h \in \mcV_h$ there holds
\begin{multline}\label{cont1}
|a(v+v_h,w+w_h)| \leqslant \tn v + v_h \tn_{GLS} \|h^{-1}(w+w_h)\|_{\Sigma}
\\
+ C \tn v^e + v_h \tn_{\mcF} \left(\sum_K \|h^{-\frac12} (w + w_h)\|^2_{\partial K
  \cap \Sigma}\right)^{\frac12}.
\end{multline}
\end{lem}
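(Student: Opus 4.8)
The plan is to reduce the estimate to element-wise integration by parts on the surface, splitting $a(\cdot,\cdot)$ into an interior residual term, which the GLS norm controls, and co-normal boundary terms on the element interfaces, which the jump norm controls. Writing $\phi=v+v_h$ and $\psi=w+w_h$, Green's formula on each cut element reads
\begin{equation*}
(\nabla_\Sigma\phi,\nabla_\Sigma\psi)_{\Sigma\cap K}=-(\Delta_\Sigma\phi,\psi)_{\Sigma\cap K}+\int_{\partial(\Sigma\cap K)}(\bfn_\partial\cdot\nabla_\Sigma\phi)\,\bar\psi\,ds,
\end{equation*}
where $\bfn_\partial$ is the in-surface co-normal of the boundary curve $\partial(\Sigma\cap K)=\Sigma\cap\partial K$. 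Summing over $K\in\mcK_h$ and subtracting the $k^2$ term gives
\begin{equation*}
a(\phi,\psi)=-\sum_{K}(\Delta_\Sigma\phi+k^2\phi,\psi)_{\Sigma\cap K}+\sum_{K}\int_{\partial(\Sigma\cap K)}(\bfn_\partial\cdot\nabla_\Sigma\phi)\,\bar\psi\,ds.
\end{equation*}

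First I would bound the interior term. Inserting compensating factors $h$ and $h^{-1}$ and applying Cauchy--Schwarz yields
\begin{equation*}
\Big|\sum_{K}(\Delta_\Sigma\phi+k^2\phi,\psi)_{\Sigma\cap K}\Big|\leqslant\Big(\sum_{K}h^2\|\Delta_\Sigma\phi+k^2\phi\|^2_{\Sigma\cap K}\Big)^{1/2}\|h^{-1}\psi\|_\Sigma=\tn\phi\tn_{GLS}\,\|h^{-1}\psi\|_\Sigma,
\end{equation*}
which is precisely the first term on the right-hand side of \eqref{cont1}.

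For the interface sum I would first note that, since $\Sigma$ is closed and interior to $\Omega_h$, only internal faces $F\in\mcF_I$ meet $\Sigma$; on each such face the two co-normals are opposite, and because $\psi$ is continuous the element sum collapses to $\sum_{F\in\mcF_I}\int_{\Sigma\cap F}[\bfn_\partial\cdot\nabla_\Sigma\phi]\,\bar\psi\,ds$. As $\bfn_\partial$ is tangent to $\Sigma$ we may replace $\nabla_\Sigma\phi$ by $\nabla\phi$ in this jump; the smooth extension $v^e$ has a continuous gradient and drops out, while for the continuous piecewise linear $v_h$ the gradient jump is purely normal to $F$, so $[\nabla\phi]=[\bfn_F\cdot\nabla v_h]\,\bfn_F$ and hence $|[\bfn_\partial\cdot\nabla_\Sigma\phi]|\leqslant|[\bfn_F\cdot\nabla v_h]|$.

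It remains to assemble the jump norm, and this is where I expect the main difficulty. The curve integral is one-dimensional over $\Sigma\cap F$, whereas $j(\cdot,\cdot)$ integrates over the two-dimensional face $F$, so I would use that $[\bfn_F\cdot\nabla v_h]$ is constant on each $F$ to obtain the scaling $\|[\bfn_F\cdot\nabla v_h]\|_{\Sigma\cap F}\leqslant Ch^{-\frac12}\|[\bfn_F\cdot\nabla v_h]\|_F$, which rests on $|\Sigma\cap F|\leqslant Ch^{-1}|F|$ together with shape regularity. A Cauchy--Schwarz inequality on each curve and then over faces, using $\sum_F\|\psi\|^2_{\Sigma\cap F}\leqslant C\sum_K\|\psi\|^2_{\partial K\cap\Sigma}$, then gives
\begin{equation*}
\Big|\sum_{F\in\mcF_I}\int_{\Sigma\cap F}[\bfn_\partial\cdot\nabla_\Sigma\phi]\,\bar\psi\,ds\Big|\leqslant C\,\tn v_h\tn_{\mcF}\Big(\sum_{K}\|h^{-\frac12}\psi\|^2_{\partial K\cap\Sigma}\Big)^{1/2}.
\end{equation*}
Since the jumps of $v^e$ vanish, $\tn v_h\tn_{\mcF}=\tn v^e+v_h\tn_{\mcF}$, and combining with the interior estimate yields \eqref{cont1}. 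The interior bound is routine; the delicate points are the reduction of the tangential co-normal jump to the face-normal gradient jump of $v_h$ and the curve-versus-face scaling just described.
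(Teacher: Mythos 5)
Your proposal is correct and follows essentially the same route as the paper: element-wise integration by parts on $\Sigma$, Cauchy--Schwarz with the weights $h$ and $h^{1/2}$ to produce the GLS term, and control of the co-normal jump term by $\tn v_h\tn_{\mcF}$. The only difference is one of detail: what the paper compresses into ``trace inequalities from $\partial K\cap\Sigma$ to $F\in\partial K$'' you spell out explicitly, via the reduction of the tangential co-normal jump to the face-normal gradient jump $[\bfn_F\cdot\nabla v_h]$ and the constant-per-face curve-to-face scaling, which is a faithful expansion of the same argument.
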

\begin{proof}
Using an integration by parts we see that
\begin{multline}
a(v+v_h,w+w_h)| = \sum_K \int_{\partial K
  \cap \Sigma} \jump{\nabla_{\Sigma} v_h} \cdot n_{\partial K \cap
  \Sigma} \overline{ (w+w_h) }~\mbox{d}\sigma \\
- \sum_K (\Delta_{\Sigma} (v + v_h) +k^2
(v + v_h) , w+w_h)_{K \cap \Sigma}.
\end{multline}
We now multiply and divide and with $h^{\frac12}$ in the first term of the
right hand side and with $h$ in the second. Then we apply the Cauchy-Schwarz
inequality and observe that by using
trace inequalities from $\partial K \cap \Sigma$ to $F \in \partial K$,
\begin{equation}
\sum_K (h \jump{\nabla_{\Sigma} v_h}, \jump{\nabla_{\Sigma} v_h}) _{\partial K
  \cap \Sigma} \leqslant C \tn v_h \tn^2_{\mcF} = C \tn v^e+v_h \tn^2_{\mcF}.
\end{equation}
This completes the proof of \eqref{cont1}.
\end{proof}
\begin{rem}
Observe that by the symmetry of the form $a(\cdot,\cdot)$ the claim
holds also when $v,v_h$ and $w,w_h$ are interchanged.
\end{rem}
\begin{lem}\label{stab_conv}
Let $u$ be the solution of \eqref{weak_Helmholtz} and $\tilde u_h$ the solution of
\eqref{semi_FEM}. Assume that the regularity estimate \eqref{regularity} holds then
\begin{equation}
\tn u - \tilde u_h \tn_{GLS} + \tn u^e - \tilde u_h \tn_{\mcF} \leqslant C \mbox{Im}(\gamma)^{-1}  (|\gamma| +
1) hk \|f\|_{\Sigma}.
\end{equation}
\end{lem}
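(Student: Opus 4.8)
The plan is to combine a Galerkin-orthogonality (consistency) identity with the coercivity that the stabilization furnishes on its own semi-norms once one passes to the imaginary part, and then reduce everything to the interpolation estimates of Lemma~\ref{lem:approx} and the regularity bound~\eqref{regularity}. For consistency, note that $u$ solves~\eqref{weak_Helmholtz}, so $a(u,v)=l(v)$ for every $v\in\mcV_h$; that $u$ solves~\eqref{eq:LB} gives $\Delta_\Sigma u+k^2u=-f$ pointwise, whence $s(u,v)=-\sum_K h^2(f,\Delta_\Sigma v+k^2v)_{\Sigma\cap K}$; and the extension $u^e$, being $H^2$ and hence $C^1$ across interior faces of $\Omega_h$, has vanishing normal-gradient jumps, so $j(u^e,v)=0$. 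Together these show that $u$ satisfies the equation solved by $\tilde u_h$, and subtracting yields
\[
a(u-\tilde u_h,v)+\gamma s(u-\tilde u_h,v)+\gamma j(u^e-\tilde u_h,v)=0 \qquad \forall v\in\mcV_h.
\]

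Next I would exploit coercivity. Let $\pi_h u^e\in\mcV_h$ be the interpolant and set $\chi=\tilde u_h-\pi_h u^e$. Since $\mathrm{Re}(\gamma)=0$ the number $a(\chi,\chi)$ is real, so $\mathrm{Im}(\gamma)\bigl(\tn\chi\tn_{GLS}^2+\tn\chi\tn_{\mcF}^2\bigr)=\mathrm{Im}\bigl(A(\chi,\chi)+\gamma j(\chi,\chi)\bigr)$. Splitting $\chi=(\tilde u_h-u)+(u-\pi_h u^e)$ in the first argument and using the orthogonality relation to annihilate the discrete part $\tilde u_h-u$, the right-hand side collapses to $\mathrm{Im}\bigl(a(u-\pi_h u^e,\chi)+\gamma s(u-\pi_h u^e,\chi)+\gamma j(u^e-\pi_h u^e,\chi)\bigr)$, where only interpolation errors survive.

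I would then bound these three terms. For the indefinite form $a$ the crucial move is to apply the continuity Lemma~\ref{cont_lem} in its interchanged form (the Remark), so that $\chi$ occupies the stabilization-norm slots while the interpolation error carries the weighted factors $\|h^{-1}(u-\pi_h^l u)\|_\Sigma$ and $\bigl(\sum_K\|h^{-\frac12}(u-\pi_h^l u)\|^2_{\partial K\cap\Sigma}\bigr)^{1/2}$; these are controlled by~\eqref{interpol} and the standard $L^2$ bound~\eqref{interpolstandard}, each of size $O\bigl(h(1+h^2k^2)|u|_{2,\Sigma}\bigr)$. The $s$ and $j$ terms are handled by Cauchy--Schwarz in their own inner products and estimated through~\eqref{interp_GLS} and~\eqref{interpol} by $|\gamma|\,\tn u-\pi_h^l u\tn_{GLS}\,\tn\chi\tn_{GLS}$ and $|\gamma|\,\tn u^e-\pi_h u^e\tn_{\mcF}\,\tn\chi\tn_{\mcF}$.

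A Cauchy--Schwarz combination of the resulting products and division by the coercivity constant $\mathrm{Im}(\gamma)$ then give $\tn\chi\tn_{GLS}+\tn\chi\tn_{\mcF}\leqslant C\,\mathrm{Im}(\gamma)^{-1}(|\gamma|+1)\,h\|u\|_{2,\Sigma}$, the factor $(|\gamma|+1)$ appearing because the $s$- and $j$-contributions carry $|\gamma|$ while the $a$-contribution is $O(1)$. A triangle inequality reintroduces the interpolation errors $\tn u-\pi_h^l u\tn_{GLS}$ and $\tn u^e-\pi_h u^e\tn_{\mcF}$, which are of the same order and, since $\mathrm{Im}(\gamma)^{-1}(|\gamma|+1)\geqslant1$, are absorbed; inserting $\|u\|_{2,\Sigma}\leqslant Ck\|f\|_\Sigma$ from~\eqref{regularity} (the $H^2$-seminorm dominating as $k\geqslant1$) turns $h\|u\|_{2,\Sigma}$ into $hk\|f\|_\Sigma$, which is the claim. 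I expect the main obstacle to be the correct orientation of Lemma~\ref{cont_lem}: the stabilization semi-norms do not control $\|h^{-1}\chi\|_\Sigma$, so the $h^{-1}$ and $h^{-\frac12}$ weights must be made to fall on the interpolation error, where an extra power of $h$ is available, rather than on $\chi$; tracking these powers is precisely what yields the $O(hk)$ rate instead of a loss of a full power of $k$.
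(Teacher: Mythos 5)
Your proof is correct, and it rests on the same three pillars as the paper's: the imaginary-part coercivity $\mathrm{Im}\bigl(A(\cdot,\cdot)+\gamma j(\cdot,\cdot)\bigr)=\mathrm{Im}(\gamma)\bigl(\tn\cdot\tn_{GLS}^2+\tn\cdot\tn_{\mcF}^2\bigr)$, Galerkin orthogonality from consistency, and Lemma~\ref{cont_lem} combined with Lemma~\ref{lem:approx} and the regularity bound~\eqref{regularity}. The execution differs in the choice of error decomposition. The paper keeps the \emph{full} error $u-\tilde u_h$ in the coercive slot throughout: it writes $\mathrm{Im}(\gamma)\bigl(\tn u-\tilde u_h\tn_{GLS}^2+\tn u^e-\tilde u_h\tn_{\mcF}^2\bigr)=\mathrm{Im}\bigl(A(u-\tilde u_h,u-\tilde u_h)+\gamma j(u^e-\tilde u_h,u^e-\tilde u_h)\bigr)$ and uses orthogonality to replace the \emph{second} argument by the interpolation error $u-\pi_h u^e$, so Lemma~\ref{cont_lem} applies in its original orientation (error in the $GLS$/$\mcF$ slots, weights on the interpolation error) and no triangle inequality is ever needed. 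You instead run coercivity on the \emph{discrete} error $\chi=\tilde u_h-\pi_h u^e$, use orthogonality in the first argument, and therefore need both the interchanged form of Lemma~\ref{cont_lem} stated in the Remark and a concluding triangle inequality, where absorbing the interpolation terms uses $\mathrm{Im}(\gamma)^{-1}(|\gamma|+1)\geqslant 1$. Both routes deliver the same bound with comparable effort; the paper's is slightly leaner, while yours is the more standard C\'ea-style discrete-error argument and has the merit of making explicit where the Remark is actually required, and of spelling out the consistency computation ($s(u,v)=-\sum_K h^2(f,\Delta_\Sigma v+k^2 v)_{\Sigma\cap K}$ matching $l_s$) that the paper leaves implicit. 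Two small inaccuracies in your justifications, neither of which affects the result: $a(\chi,\chi)$ is real because $a(v,v)=\|\nabla_\Sigma v\|_\Sigma^2-\|kv\|_\Sigma^2$ for any $v$, not because $\mathrm{Re}(\gamma)=0$; and $j(u^e,v)=0$ should be justified by noting that $\nabla u^e\in H^1(\Omega_h)$ has single-valued traces on interior faces, rather than by a $C^1$ embedding, since $H^2$ does not embed into $C^1$ in three dimensions.
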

\begin{proof}
By the condition $\mbox{Im}(\gamma)>0$, and the regularity of $u$ we note that there holds
\begin{align}\nonumber
&\mbox{Im}(\gamma) (\tn u - \tilde u_h \tn^2_{GLS} + \tn u^e - \tilde u_h
\tn^2_{\mcF} ) 
\\
&\qquad = \mbox{Im}(A(u - \tilde u_h,u - \tilde u_h)+\gamma j(u^e - \tilde u_h,u^e - \tilde u_h)).
\end{align}
Using now the consistency of the formulation we have by Galerkin
orthogonality
\begin{align} \nonumber
&\mbox{Im}(\gamma) (\tn u - \tilde u_h \tn^2_{GLS} + \tn u^e - \tilde u_h
\tn^2_{\mcF} ) 
\\
&\qquad = \mbox{Im}(A(u - \tilde u_h,u - \pi_h u^e ) +\gamma
j(u^e - \tilde u_h,u^e -\pi_h u^e))
\\
\label{error_rep}
&\qquad \leqslant  |a(u - \tilde u_h,u - \pi_h u^e ) + \gamma s(u - \tilde u_h,u -
\pi_h u^e ) + \gamma j(u^e - \tilde u_h,u^e - \pi_h u^e )|.
\end{align}
By Lemma \ref{cont_lem} there holds
\begin{multline}\label{continuity}
|a(u - \tilde u_h,u - \pi_h u^e )|\leqslant \tn u - \tilde u_h
\tn_{GLS} \|h^{-1} (u - \pi_h u^e)\|_{\Sigma}
\\
+ C \tn u^e - \tilde u_h
\tn_{\mcF} \left(\sum_K \|h^{-\frac12} (u - \pi_h u^e)\|^2_{\partial K
  \cap \Sigma}\right)^{\frac12}.
\end{multline}
For the stabilization terms we use the Cauchy-Schwarz inequality to
obtain
\begin{multline}
|\gamma s(u - \tilde u_h,u -
\pi_h u^e ) + \gamma j(u^e - \tilde u_h,u^e - \pi_h u^e )| \\
\leqslant |\gamma|  (\tn u - \tilde u_h \tn_{GLS} + \tn u^e - \tilde u_h
\tn_{\mcF} )  (\tn u - \pi_h u^e\tn_{GLS} + \tn u^e - \pi_h u^e
\tn_{\mcF} ).
\end{multline}
The claim now follows by applying Lemma \ref{lem:approx} and the
regularity estimate \eqref{regularity}.
\end{proof}
\begin{thm}\label{semi_conv}
Let $u$ be the solution of \eqref{weak_Helmholtz} and $\tilde u_h$ the solution of
\eqref{semi_FEM}. Assume that the regularity estimate \eqref{regularity} holds. Then
\begin{equation}
\tn u - \tilde u_h \tn_{\Sigma,k} \leqslant C_{\gamma} \mbox{Im}(\gamma)^{-1}  (|\gamma| +
1) (hk + h^2 k^3)  \|f\|_{\Sigma}.
\end{equation}
\end{thm}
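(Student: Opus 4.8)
The plan is to follow the Schatz duality strategy, using the stabilization to supply the control that the indefinite form $a(\cdot,\cdot)$ cannot provide on its own. Write $e := u - \tilde u_h$. Since $a(v,v) = \|\nabla_\Sigma v\|_\Sigma^2 - \|kv\|_\Sigma^2$ whereas $\tn v\tn_{\Sigma,k}^2 = \|\nabla_\Sigma v\|_\Sigma^2 + \|kv\|_\Sigma^2$, the G\aa rding identity $\tn e\tn_{\Sigma,k}^2 = a(e,e) + 2\|ke\|_\Sigma^2$ holds, so it suffices to bound $a(e,e)$ and $k\|e\|_\Sigma$ separately. The whole argument rests on one input: abbreviating $\eta := \tn e\tn_{GLS} + \tn u^e - \tilde u_h\tn_{\mcF}$, Lemma~\ref{stab_conv} already gives $\eta \leqslant C\,\mbox{Im}(\gamma)^{-1}(|\gamma|+1)\,hk\,\|f\|_\Sigma$, a bound on the error that is available \emph{independently} of $\tn e\tn_{\Sigma,k}$. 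I also record the Galerkin orthogonality: since $j(u^e,\cdot)=0$ (as $u^e$ is smooth) and $s(u,\cdot)$ cancels against the consistency correction in $l_s$, one has $a(e,v) + \gamma s(e,v) + \gamma j(u^e-\tilde u_h,v) = 0$ for all $v\in\mcV_h$.

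First I would estimate $k\|e\|_\Sigma$ by duality. As $a$ is Hermitian, I introduce the adjoint solution $z\in H^2(\Sigma)$ of $a(w,z)=(w,e)_\Sigma$ for all $w\in H^1(\Sigma)$, i.e. $-\Delta_\Sigma z - k^2 z = e$, for which \eqref{regularity} yields $|z|_{2,\Sigma}\lesssim k\|e\|_\Sigma$ and $\|z\|_{2,\Sigma}\lesssim k\|e\|_\Sigma$. Taking $w=e$ gives $\|e\|_\Sigma^2 = a(e,z)$, and I split $a(e,z) = a(e,z-\pi_h^l z) + a(e,\pi_h^l z)$. The interpolant term is rewritten by orthogonality (with $v=\pi_h z^e$) as $a(e,\pi_h^l z) = -\gamma s(e,\pi_h^l z) - \gamma j(u^e-\tilde u_h,\pi_h z^e)$. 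The consistency term $a(e,z-\pi_h^l z)$ is bounded by the continuity Lemma~\ref{cont_lem}, which puts the weak norm $\eta$ on the first argument and the interpolation quantities $\|h^{-1}(z-\pi_h^l z)\|_\Sigma$ and $(\sum_K\|h^{-1/2}(z-\pi_h^l z)\|_{\partial K\cap\Sigma}^2)^{1/2}$ on the second; by the $L^2$ interpolation estimate following from \eqref{interpolstandard}, by \eqref{interpol}, and by $|z|_{2,\Sigma}\lesssim k\|e\|_\Sigma$, both factors are $O(hk\|e\|_\Sigma)$ under the standing assumption that $hk$ is bounded. The stabilization remainders are bounded by Cauchy--Schwarz by $|\gamma|\,\eta\,(\tn\pi_h^l z\tn_{GLS}+\tn\pi_h z^e\tn_{\mcF})$, and \eqref{pihlbound} applied to $z$ (whose data is $e$) makes the last factor $O(hk\|e\|_\Sigma)$ as well. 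Collecting and dividing by $\|e\|_\Sigma$ leaves $\|e\|_\Sigma \lesssim (|\gamma|+1)\,hk\,\eta$; inserting the bound on $\eta$ gives $k\|e\|_\Sigma \lesssim C_\gamma\,\mbox{Im}(\gamma)^{-1}(|\gamma|+1)\,h^2k^3\,\|f\|_\Sigma$, which is exactly the $h^2k^3$ contribution.

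Next I would bound $a(e,e)$ in the same spirit. Writing $a(e,e) = a(e,u-\pi_h^l u) + a(e,\pi_h^l u - \tilde u_h)$ and using orthogonality with $v=\pi_h u^e - \tilde u_h\in\mcV_h$ to replace the second summand by $-\gamma s(e,\pi_h^l u - \tilde u_h) - \gamma j(u^e-\tilde u_h,\pi_h u^e - \tilde u_h)$, I again estimate the $a$-term by Lemma~\ref{cont_lem} and the stabilization terms by Cauchy--Schwarz. The interpolation factors are controlled by Lemma~\ref{lem:approx} together with $|u|_{2,\Sigma}\lesssim k\|f\|_\Sigma$, while the stabilization norms of $\pi_h^l u - \tilde u_h$ are split by the triangle inequality into an interpolation part ($\lesssim hk\|f\|_\Sigma$, via \eqref{interp_GLS} and $j(u^e,\cdot)=0$) and the error part ($\lesssim\eta$). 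Since every factor is $O(hk\|f\|_\Sigma + \eta)$ and $\eta\lesssim\mbox{Im}(\gamma)^{-1}(|\gamma|+1)hk\|f\|_\Sigma$, this gives $a(e,e)\lesssim C_\gamma\,\mbox{Im}(\gamma)^{-2}(|\gamma|+1)^3(hk)^2\|f\|_\Sigma^2$, hence $\sqrt{a(e,e)}\lesssim C_\gamma\,\mbox{Im}(\gamma)^{-1}(|\gamma|+1)\,hk\,\|f\|_\Sigma$ once the higher power of $(|\gamma|+1)$ is absorbed into $C_\gamma$; this is the $hk$ contribution. Substituting both bounds into the G\aa rding identity and taking a square root yields the claim.

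I expect the main obstacle to be making the duality step close \emph{without} ever invoking $\tn e\tn_{\Sigma,k}$. Because $a$ carries no sign there is no coercivity to fall back on, and the argument succeeds only because Lemma~\ref{cont_lem} lets me estimate $a(e,\cdot)$ purely through the weak norms $\eta$, which are controlled a priori by Lemma~\ref{stab_conv}; this is what decouples the $L^2$ estimate from the energy estimate. The secondary difficulty is the bookkeeping of powers of $hk$: one must check that the boundary and $\Delta_\Sigma$ contributions carried by the jump and GLS terms only ever produce harmless factors $(1+h^2k^2)$, so that under the bounded-$hk$ assumption the duality genuinely gains one power of $hk$ over the weak-norm estimate and delivers $h^2k^3$ rather than something worse.
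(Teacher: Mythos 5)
Your proposal is correct and follows essentially the same route as the paper: the same duality argument with the adjoint solution $z$ (data $u-\tilde u_h$), closed through Lemma \ref{cont_lem} and the a priori weak-norm bound of Lemma \ref{stab_conv} so that the $L^2$ estimate never invokes $\tn u - \tilde u_h\tn_{\Sigma,k}$, followed by the same G\aa rding-plus-Galerkin-orthogonality identity for the energy norm. The only differences are cosmetic — you place the GLS term of the orthogonality relation on the discrete error $\pi_h u^e - \tilde u_h$ rather than on the interpolation error $u - \pi_h u^e$ as the paper does (equivalent, since $\mbox{Re}(\gamma_s\, s(e,e))=0$), and your lift notation $\pi_h^l$ is superfluous in the semi-discrete setting — so this matches the paper's proof in substance.
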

\begin{proof}
First let $z$ be the solution of \eqref{weak_Helmholtz} with the right hand
side $f = u-\tilde u_h$. Then by the finite element formulation \eqref{semi_FEM} there holds
\begin{equation}
\|u - \tilde u_h \|^2_\Sigma 
= a(u - \tilde u_h, z - \pi_h z^e) - \gamma s(u -
\tilde u_h,\pi_h z^e) - \gamma j(u^e -
\tilde u_h,\pi z^e).
\end{equation}
Using Lemma \ref{cont_lem} we obtain the bound
\begin{align}
\|u - \tilde u_h \|^2_\Sigma 
&\leqslant \tn u - \tilde u_h \tn_{GLS}
\|h^{-1}(z - \pi_h z^e)\|_{\Sigma} 
\\
&\qquad + C\tn u^e - \tilde u_h
\tn_\mcF \left(\sum_K \|h^{-\frac12} (z - \pi_h z^e)\|^2_{\partial K
  \cap \Sigma}\right)^{\frac12}
\\
&\qquad + |\gamma|(\tn u - \tilde u_h \tn_{GLS} + \tn u^e - \tilde u_h \tn_\mcF)(\tn \pi_h z^e \tn_{GLS} + \tn \pi_h z^e \tn_\mcF).
\end{align}
By interpolation, the definition of $z$ and the regularity of $z$ we
obtain
\begin{equation}
\|h^{-1}(z - \pi_h z^e)\|_{\Sigma} \leqslant C hk \|u -
\tilde u_h\|_{\Sigma}
\end{equation}
\begin{equation}\label{stab_dual1}
\tn \pi_h z^e \tn_{GLS} \leqslant \tn \pi_h z^e -
z\tn_{GLS} + C h \|u -
\tilde u_h\|_{\Sigma} \leqslant C h (1 + k )\|u -
\tilde u_h\|_{\Sigma}
\end{equation}
and
\begin{equation}\label{stab_dual2}
\tn \pi_h z^e \tn_\mcF = \tn \pi_h z^e - z^e\tn_\mcF \leq
C hk \|u - \tilde u_h\|_{\Sigma}.
\end{equation}
Collecting the above bounds and using Lemma \ref{stab_conv} we obtain
\begin{equation}\label{L2error}
\|k (u - \tilde u_h) \|_\Sigma \leqslant C ( 1 + |\gamma|)  hk^2 (\tn u - \tilde u_h \tn_{GLS} +
\tn u^e - \tilde u_h \tn_\mcF) \leqslant C_{\gamma} h^2k^3
\|f\|_{\Sigma}.
\end{equation}
We may now proceed to bound $\tn u - \tilde u_h \tn^2_{\Sigma,k}$  using
the real part of the bilinear form, Galerkin orthogonality, and the
control of the $L^2$-norm of the error.
\begin{equation}\label{H1semi}
\tn u - \tilde u_h \tn^2_{\Sigma,k} = \mbox{Re}(A(u - \tilde u_h,u -  \pi_h u^e) -
\gamma j(\tilde u_h,\tilde u_h -  \pi_h u^e)) + 2 \|k (u - \tilde u_h) \|_\Sigma^2.
\end{equation}
In the first term of the right hand side we now proceed as for
\eqref{error_rep} using the inequality
\eqref{continuity} and Lemma \ref{stab_conv} to conclude that
\begin{equation}
|A(u - \tilde u_h,u -  \pi_h u^e) -
\gamma j(\tilde u_h,\tilde u_h -  \pi_h u^e)| \leqslant C_{\gamma} (hk)^2 \|f\|_\Sigma^2.
\end{equation}
We conclude by combining this bound with \eqref{L2error}.
\end{proof}
\begin{lem}\label{apriori_bound}
Under the same assumptions as for Lemma \ref{stab_conv} and Theorem
\ref{semi_conv} there holds
\begin{equation}\label{L2error2}
\|u - \tilde u_h\|_\Sigma \leqslant C_{\gamma} (h k)^2 \|f\|_{\Sigma}
\end{equation}
and
\begin{equation}
\tn \tilde u_h \tn_{\Sigma,k} \leqslant C_{\gamma}
 (1 + k) \|f\|_{\Sigma},\quad \tn \tilde u_h \tn_{GLS} \leqslant C_{\gamma}
 (1 + k) h \|f\|_{\Sigma}.
\end{equation}
\end{lem}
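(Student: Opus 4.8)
The first estimate \eqref{L2error2} requires essentially no new work, since it is already contained in the proof of Theorem \ref{semi_conv}. Inequality \eqref{L2error} gives $\|k(u-\tilde u_h)\|_\Sigma \leqslant C_\gamma h^2 k^3 \|f\|_\Sigma$, and since $k\geqslant 1$ I would simply divide through by $k$ to obtain $\|u-\tilde u_h\|_\Sigma \leqslant C_\gamma h^2 k^2 \|f\|_\Sigma = C_\gamma (hk)^2\|f\|_\Sigma$, which is exactly \eqref{L2error2}.

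For the two remaining bounds my plan is to pass from an estimate on the error $u - \tilde u_h$ to one on $\tilde u_h$ itself via the triangle inequality $\tn \tilde u_h \tn \leqslant \tn u \tn + \tn u - \tilde u_h \tn$, applied in each of the norms $\tn\cdot\tn_{\Sigma,k}$ and $\tn\cdot\tn_{GLS}$. I would control the norms of the exact solution directly by the data: from the regularity estimate \eqref{regularity} we have $\tn u \tn_{\Sigma,k}^2 = |u|_{1,\Sigma}^2 + \|ku\|_\Sigma^2 \leqslant C\|f\|_\Sigma^2$, while the relation displayed at the end of the proof of Lemma \ref{lem:approx}, namely $h^2\tn u\tn_{\Sigma,k}^2 + \tn u\tn_{GLS}^2 \leqslant C h^2\|f\|_\Sigma^2$, gives $\tn u \tn_{GLS} \leqslant C h \|f\|_\Sigma$.

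It then remains to insert the corresponding error bounds and simplify the resulting factors. Theorem \ref{semi_conv} provides $\tn u - \tilde u_h \tn_{\Sigma,k} \leqslant C_\gamma (hk + h^2 k^3)\|f\|_\Sigma$ and Lemma \ref{stab_conv} provides $\tn u - \tilde u_h \tn_{GLS} \leqslant C_\gamma hk \|f\|_\Sigma$. The only point requiring attention is the reduction of the mixed powers of $hk$: since $hk$ is assumed bounded by a constant, $h^2 k^3 = (hk)^2 k \leqslant C k \leqslant C(1+k)$ and $hk \leqslant C(1+k)$, so that $\tn u - \tilde u_h \tn_{\Sigma,k} \leqslant C_\gamma (1+k)\|f\|_\Sigma$, while $hk \leqslant h(1+k)$ gives $\tn u - \tilde u_h \tn_{GLS} \leqslant C_\gamma h (1+k)\|f\|_\Sigma$. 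Combining each error bound with the matching bound on $\tn u \tn$ (writing $Ch\|f\|_\Sigma \leqslant Ch(1+k)\|f\|_\Sigma$ for the GLS norm and absorbing $\tn u\tn_{\Sigma,k}\leqslant C\|f\|_\Sigma$ into $C(1+k)\|f\|_\Sigma$) yields the two stated inequalities. I do not anticipate any genuine difficulty here: the argument is a routine assembly of Theorem \ref{semi_conv}, Lemma \ref{stab_conv}, Lemma \ref{lem:approx} and the regularity estimate, and the only subtlety is the bookkeeping of the powers of $hk$ under the standing assumption that $hk$ is bounded.
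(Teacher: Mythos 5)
Your proposal is correct and follows essentially the same route as the paper: the first bound is read off directly from \eqref{L2error}, and the remaining two follow by adding and subtracting the exact solution $u$, applying the triangle inequality, and invoking Lemma \ref{stab_conv} and Theorem \ref{semi_conv}. The only difference is that you spell out the bounds $\tn u \tn_{\Sigma,k} \leqslant C\|f\|_\Sigma$ and $\tn u \tn_{GLS} \leqslant Ch\|f\|_\Sigma$ on the exact solution, which the paper leaves implicit.
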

\begin{proof}
The first claim follows directly from equation \eqref{L2error}.
The remaining inequalities are immediate by adding and subtracting the exact solution $u$ in the
norms of the left hand side, followed by a triangle inequality and then applying the results of Lemma
\ref{stab_conv} and Theorem \ref{semi_conv}.
\end{proof}
\subsection{Error Estimates for the Fully Discrete Formulation}
To obtain an error estimate for the fully discrete scheme we need a
equivalent to Lemma \ref{cont_lem} for the formulation on the discrete
surface and we also need
upper bounds of the conformity error that we commit by approximating the
surface. We start by proving these technical lemmas.
\begin{lem}\label{cont_lem2}(Continuity)
For all $v,w \in H^2(\Sigma)$, $v_h,w_h \in \mcV_h$ there holds
\begin{multline}
  \label{cont2}
|a(v+v^l_h,w+w^l_h)| \leqslant \tn v + v^l_h \tn_{GLS} \|h^{-1}(w+w^l_h)\|_{\Sigma}
\\
+ C (\tn v^e + v_h \tn_{\mcF}  + h \tn v_h^l \tn_{\Sigma,k})\left(\sum_K \|h^{-\frac12} (w + w^l_h)\|^2_{\partial K
    \cap \Sigma}\right)^{\frac12}.
\end{multline}
\end{lem}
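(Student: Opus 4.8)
The plan is to mimic the proof of Lemma \ref{cont_lem} closely, since the statement \eqref{cont2} is the exact analogue of \eqref{cont1} with the discrete test/trial functions $v_h, w_h$ replaced by their lifts $v_h^l, w_h^l$. Note crucially that the form $a(\cdot,\cdot)$ appearing on the left is still the \emph{continuous} surface form (integration over $\Sigma$), so the integration-by-parts machinery from Lemma \ref{cont_lem} applies verbatim; the only new feature is the presence of an extra term in the final bound, namely $h\tn v_h^l\tn_{\Sigma,k}$, which I expect to arise precisely because lifted discrete functions are no longer piecewise affine on $\Sigma$ and hence carry a nonzero element-interior Laplacian contribution.

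First I would integrate by parts element by element over $K\cap\Sigma$, exactly as in Lemma \ref{cont_lem}, to write
\begin{equation}
a(v+v_h^l,w+w_h^l) = \sum_K \int_{\partial K \cap \Sigma} \jump{\nabla_\Sigma (v+v_h^l)}\cdot n_{\partial K\cap\Sigma}\,\overline{(w+w_h^l)}\,d\sigma - \sum_K (\Delta_\Sigma(v+v_h^l)+k^2(v+v_h^l),w+w_h^l)_{K\cap\Sigma}.
\end{equation}
Then I would split and bound the two sums separately. For the boundary term I multiply and divide by $h^{\frac12}$, apply Cauchy-Schwarz, and use the trace inequality from $\partial K\cap\Sigma$ to $F\in\partial K$ together with the identity $\jump{\nabla_\Sigma v_h^l}$ controlled by the gradient jumps; here I would invoke the relation $\nabla_\Sigma w^l = \bfB^{-T}\nablash w$ from \eqref{eq:ve-tangential-gradient} and the bound \eqref{eq:BBTbound} on $\bfB$ to convert tangential-gradient jumps on $\Sigma$ into the jump seminorm $\tn v^e\tn_\mcF = j(v,v)^{1/2}$, picking up only bounded constants. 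This reproduces the first factor $\tn v^e+v_h\tn_\mcF$ multiplying the boundary-trace factor $\left(\sum_K\|h^{-1/2}(w+w_h^l)\|^2_{\partial K\cap\Sigma}\right)^{1/2}$.

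The interior term requires more care and is where the genuinely new contribution appears. For the smooth part $v$ I proceed as before, extracting $\tn v\tn_{GLS}$ against $\|h^{-1}(w+w_h^l)\|_\Sigma$. For the lifted discrete part $v_h^l$, however, $\Delta_\Sigma v_h^l$ does \emph{not} vanish on $K\cap\Sigma$; instead, by the identity \eqref{eq:Delta_bound} established in Lemma \ref{lem:approx}, there holds $\Delta_\Sigma v_h^l = -\mathrm{tr}(\kappa)\,\nabla v_h^l\cdot n_\Sigma$ on each $K\cap\Sigma$. The main obstacle, and the reason the extra term $h\tn v_h^l\tn_{\Sigma,k}$ is needed, is precisely that this curvature-induced Laplacian must be absorbed: using \eqref{eq:Delta_bound} together with the boundedness of $\mathrm{tr}(\kappa)$ from \eqref{eq:Hesse-bound}, I would bound $h\|\Delta_\Sigma v_h^l + k^2 v_h^l\|_{K\cap\Sigma}$ by $C(h\|\nabla v_h^l\|_{K\cap\Sigma} + h\|k^2 v_h^l\|_{K\cap\Sigma})$. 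The gradient term is controlled by Lemma \ref{grad_cont}, which dominates $h\|\nabla v_h^l\|$ by $h\|\nabla_{\Sigma} v_h^l\|$ plus a jump contribution already accounted for in the $\mcF$-norm, while the zeroth-order term contributes $h\|k^2 v_h^l\|$; collecting these yields exactly the stated factor $h\tn v_h^l\tn_{\Sigma,k}$ (up to the bounded constant $C$). Combining the smooth and discrete estimates, dividing by $h$ as appropriate to match the $h^{-1/2}$ trace factor, and reassembling completes the proof of \eqref{cont2}.
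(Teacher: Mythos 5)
Your proposal misses the actual crux of this lemma, and it misattributes the source of the extra term $h\tn v_h^l\tn_{\Sigma,k}$. First, note where that term sits in \eqref{cont2}: it multiplies the \emph{boundary} trace factor $\bigl(\sum_K \|h^{-\frac12}(w+w_h^l)\|^2_{\partial K\cap\Sigma}\bigr)^{\frac12}$, not the interior factor $\|h^{-1}(w+w_h^l)\|_\Sigma$. The interior residual term needs no special treatment at all: exactly as in Lemma \ref{cont_lem}, Cauchy--Schwarz bounds it by $\tn v+v_h^l\tn_{GLS}\,\|h^{-1}(w+w_h^l)\|_\Sigma$, and whether $\Delta_\Sigma v_h^l$ vanishes on $K\cap\Sigma$ is irrelevant because the GLS norm of the \emph{sum} $v+v_h^l$ is kept as is in the statement. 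Your plan to expand $\Delta_\Sigma v_h^l$ via \eqref{eq:Delta_bound} there produces a different inequality (with $h\tn v_h^l\tn_{\Sigma,k}$ attached to the wrong factor) and is not what the lemma claims; that identity is used elsewhere in the paper (Lemmas \ref{lem:approx} and \ref{conf_error}), not here.

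The genuine difficulty, which your treatment of the boundary term glosses over, is that
\begin{equation}
\jump{\nabla_\Sigma v_h^l} = \jump{\bfB^{-T}\bfPsh\nabla v_h}
\end{equation}
and the matrix field $\bfB^{-T}\bfPsh$ is itself \emph{discontinuous} across interelement faces, because the discrete normal $\bfn_h$ (hence $\bfPsh$, hence $\bfB$) jumps from element to element. So you cannot ``convert tangential-gradient jumps on $\Sigma$ into the jump seminorm picking up only bounded constants'' via \eqref{eq:BBTbound}: splitting the jump of the product leaves the cross term $\jump{\bfB^{-T}\bfPsh}\nabla v_h$, which is \emph{not} controlled by $\tn v_h\tn_\mcF$. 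The paper's proof is devoted precisely to this term: one subtracts the single-valued field $(\bfI-b\bfkappa)^{-T}\bfPs$ (whose jump vanishes), uses a Neumann-series argument to show $\|\jump{\bfB^{-T}\bfPsh}\|_{L^\infty(\partial K)}\leqslant Ch$, and then, after trace inequalities, controls the resulting quantity $h\|\nabla v_h\|^2_{\Omega_h}$ by $C\bigl(h^2\|\nabla_{\Sigma_h}v_h\|^2_{\Sigma_h}+\tn v_h\tn^2_\mcF\bigr)$ using Lemma \ref{grad_cont} (Lemma 4.2 of \cite{BHL13}) and the norm equivalence $\|\nabla_\Sigma v_h^l\|_\Sigma\sim\|\nabla_{\Sigma_h}v_h\|_{\Sigma_h}$. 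This is exactly where $h\tn v_h^l\tn_{\Sigma,k}$ enters. Without this step your argument proves neither \eqref{cont2} nor any usable substitute, since the coefficient-jump contribution is simply unaccounted for.
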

\begin{proof}
The proof of \eqref{cont2} is similar to that of \eqref{cont1}, but this time we instead need
to prove the inequality
\begin{equation}
\sum_K (h \jump{\nabla_{\Sigma} v^l_h}, \jump{\nabla_{\Sigma} v^l_h}) _{\partial K
  \cap \Sigma} \leqslant C \tn v_h \tn^2_{\mcF} = C \tn v^e+v_h \tn^2_{\mcF}
\end{equation}
to conclude. This leads to a slightly different argument since $\nabla_\Sigma v_h^l = \bfB^{-T} \bfPsh \nabla v_h$. It follows
that
\begin{equation}
\sum_{K \in \mathcal{T}_h} \int_{\Sigma \cap \partial K} h
|\jump{\nabla_\Sigma v_h^l}|^2~\mbox{d}\sigma \leqslant \sum_{K \in \mathcal{T}_h} \|  h^{\frac12}
|\jump{\bfB^{-T} \bfPsh \nabla  v_h}|\|_{\Sigma_h \cap \partial
  K}^2.
\end{equation}
The right hand side may be bounded as follows
\begin{multline}\label{rhs}
 \sum_{K \in \mathcal{T}_h} \|  h^{\frac12}
|\jump{\bfB^{-T} \bfPsh \nabla  v_h}|\|_{\Sigma_h \cap \partial
  K}^2 
\\  
  \leqslant C \sum_{K \in \mathcal{T}_h} \left(\|  h^{\frac12}
|\jump{\bfB^{-T} \bfPsh} \nabla  v_h|\|_{\Sigma_h \cap \partial
  K}^2 + \|  h^{\frac12}
|\jump{\nabla  v_h}|\|_{\Sigma_h \cap \partial
  K}^2\right).
\end{multline}
For the second term in the right hand side we have by a trace
inequality from $\Sigma_h \cap \partial
  K$ to $F \in \partial K$,
\begin{equation}
 \sum_{K \in \mathcal{T}_h}  \|  h^{\frac12}
|\jump{\nabla  v_h}|\|_{\Sigma_h \cap \partial
  K}^2 \leqslant \tn v_h \tn_{\mcF}.
\end{equation}
For the first term observe that also by repeated trace inequalities,
first from $\Sigma_h \cap \partial
  K$ to $\partial K$ and then from $\partial K$ to $K$,
\begin{equation}\label{rep_trace}
\|  h^{\frac12}
|\jump{\bfB^{-T} \bfPsh} \nabla  v_h|\|_{\Sigma_h \cap \partial
  K} \leqslant C \|\jump{\bfB^{-T} \bfPsh} \|_{L^\infty(\partial K)}
h^{-\frac12} \|\nabla  v_h\|_{\Omega_h}.
\end{equation}
Now using the regularity of $\Sigma$ we may write $\jump{\bfB^{-T} \bfPsh} = \jump{\bfB^{-T} \bfPsh -
  (\bfI- b \kappa)^{-T} \bfPs}$ and consequently
\begin{multline}
\bfB^{-T} \bfPsh -
  (\bfI- b \kappa)^{-T} \bfPs = \bfB^{-T} (\bfPsh - \bfPs) \\
+  (\bfI- b
  \kappa)^{-T}( \underbrace{(\bfPs (\bfPsh - \bfPs) + \bfn \otimes (\bfn_h -
  \bfn))}_{-\delta_{\Sigma}} + \bfI)^{-T} - \bfI) \bfPs.
\end{multline}
Since for $h$ small enough the spectral radius of $\delta_{\Sigma}$ is
smaller than one there holds
\begin{equation}
(\bfI- \delta_\Sigma)^{-T} - \bfI = \left(\sum_{k=0}^\infty
\delta_\Sigma^k\right)^T - \bfI = \delta_\Sigma^T \left(\sum_{k=0}^\infty
\delta_\Sigma^k\right)^T.
\end{equation}
Therefore
\begin{equation}
\|(\bfI- \delta_\Sigma)^{-T} - \bfI)\| \leqslant \frac{ \|\delta_\Sigma\|}{1-
  \|\delta_\Sigma\|} \leqslant C h
\end{equation}
and $\|\jump{\bfB^{-T} \bfPsh} \|_{L^\infty(\partial K)} \leqslant C h$.
Using this bound together with \eqref{rhs} and \eqref{rep_trace} we may write
\begin{equation}
 \sum_{K \in \mathcal{T}_h} \|  h^{\frac12}
|\jump{\bfB^{-T} \bfPsh \nabla  v_h}|\|_{\Sigma_h \cap \partial
  K}^2 \leqslant C (h \|\nabla v_h\|^2_{\Omega_h} + \tn v_h \tn^2_{\mcF}).
\end{equation}
The bound \eqref{cont2} then follows using the
arguments of Lemma 4.2 of \cite{BHL13} (see also Lemma 5.3 of
\cite{BHLM15}) leading to
\begin{equation}
h \|\nabla v_h\|^2_{\Omega_h}  \leqslant C (h^2 \|\nabla_{\Sigma_h}
v_h\|^2_{\Sigma_h} + \tn v_h \tn_{\mcF}^2)
\end{equation}
and the norm equivalence $\|\nabla_\Sigma v^l_h\|_{\Sigma} \sim \|\nabla_{\Sigma_h} v_h\|_{\Sigma_h}$.
\end{proof}
We will first prove some conformity error bounds that we
collect in a lemma.
\begin{lem}\label{conf_error}
Let $u_h$ be the solution of \eqref{full_FEM} and assume that
$hk < 1$.
Then 
\begin{gather}
|a_h(u_h, v_h) - a(u^l_h,v^l_h)| \leqslant C  h^2 \tn
u_h^l\|_{\Sigma,k} \tn v_h^l\tn_{\Sigma,k}
\\
|l_s(v^l_h) - l_h(v_h)| \leqslant C_f (h^2 \tn v^l_h\tn_{\Sigma,k} + h\tn v_h\tn_{\mcF})
\end{gather}
and 
\begin{multline}\label{stab_conf}
|s_h(u_h, v_h) - s(u^l_h,v^l_h)| \leqslant C (\tn u^l_h\tn_{GLS} +\tn u_h\tn_{\mcF} ) \tn v_h\tn_{\mcF}\\
+  C \tn v^l_h\tn_{GLS} \tn u_h\tn_{\mcF}
+Ch^2\tn
u_h^l \|_{\Sigma,k} \tn v_h^l\tn_{\Sigma,k}.
\end{multline}
\end{lem}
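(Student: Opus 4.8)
The plan is to reduce each of the three differences to an integral over the discrete surface $\Sigma_h$, using the change of measure $d\sigma = |\bfB|\,d\sigma_h$ and the gradient identities \eqref{eq:ve-tangential-gradient} together with $\nabla_\Sigma w_h^l = \bfB^{-T}\nabla_{\Sigma_h}w_h$, so that each consistency error becomes the $\Sigma_h$-integral of a bilinear expression weighted by a small geometric factor. The factors that occur are $1-|\bfB|$, $\bfP_{\Sigma_h}-|\bfB|\bfB^{-1}\bfB^{-T}$, and the non-vanishing curvature contribution coming from \eqref{eq:Delta_bound}; the first two are $O(h^2)$ by \eqref{eq:BBTbound} and \eqref{detBbounds}, which also furnish the norm equivalences $\|\nabla_\Sigma w_h^l\|_\Sigma\sim\|\nabla_{\Sigma_h}w_h\|_{\Sigma_h}$ and $\|k w_h^l\|_\Sigma\sim\|kw_h\|_{\Sigma_h}$ that let me pass between the two surfaces throughout.

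For the first estimate I would treat the mass and stiffness parts separately. Writing $(k^2 u_h^l,v_h^l)_\Sigma=\int_{\Sigma_h}k^2u_h\overline{v_h}\,|\bfB|\,d\sigma_h$, its difference with $(k^2u_h,v_h)_{\Sigma_h}$ is controlled by $\|1-|\bfB|\|_{L^\infty(\Sigma_h)}\leq Ch^2$ and gives $Ch^2\|ku_h\|_{\Sigma_h}\|kv_h\|_{\Sigma_h}$. For the stiffness part I substitute $\nabla_\Sigma w_h^l=\bfB^{-T}\nabla_{\Sigma_h}w_h$ and change measure, so that the geometric error is governed by $\bfP_{\Sigma_h}-|\bfB|\bfB^{-1}\bfB^{-T}$ acting on the tangential gradients; this matrix is $O(h^2)$ by \eqref{eq:BBTbound}--\eqref{detBbounds}, exactly as in the Laplace--Beltrami analysis of \cite{OlReGr09,BHL13}. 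Combining both parts with the norm equivalences yields the stated bound $Ch^2\tn u_h^l\tn_{\Sigma,k}\tn v_h^l\tn_{\Sigma,k}$.

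The load and stabilization estimates carry the genuinely new difficulty: since $v_h$ is affine on each $K$ we have $\Delta_{\Sigma_h}v_h=0$, whereas on $\Sigma$ the lifted Laplacian does not vanish but, by \eqref{eq:Delta_bound}, equals $-\mbox{tr}(\kappa)\,\nabla v_h^l\cdot n_\Sigma$, a multiple of the \emph{normal} component of the full gradient. For the load functional I split $l_s(v_h^l)-l_h(v_h)$ into the data term, whose difference has the form $\int_{\Sigma_h}f^e\overline{v_h}(1-|\bfB|)$ and is therefore $O(h^2)$, and the Galerkin--least--squares term $-\gamma_s h^2\sum_K(f,\Delta_\Sigma v_h^l)_{\Sigma\cap K}$, which after \eqref{eq:Delta_bound} is bounded by $C|\gamma_s|h^2\|f\|_\Sigma\|\nabla v_h^l\cdot n_\Sigma\|_\Sigma$. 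The crux is to control this normal gradient, and here Lemma \ref{grad_cont} is decisive: it gives $h\|\nabla v_h\|_{\Sigma_h}\leq C(h\|\nabla_{\Sigma_h}v_h\|_{\Sigma_h}+\tn v_h\tn_{\mcF})$, so that $h^2\|\nabla v_h^l\cdot n_\Sigma\|$ is bounded by $C(h^2\tn v_h^l\tn_{\Sigma,k}+h\tn v_h\tn_{\mcF})$, which is exactly the claimed right-hand side with $C_f\sim\|f\|_\Sigma$.

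For the stabilization difference I would write both forms over $\Sigma_h$: $s_h(u_h,v_h)=\sum_Kh^2(k^2u_h,k^2v_h)_{\Sigma_h\cap K}$, while changing measure gives $s(u_h^l,v_h^l)=\sum_Kh^2\int_{\Sigma_h\cap K}(k^2u_h-\tau^u)(\overline{k^2v_h-\tau^v})|\bfB|$, where $\tau^w=(\mbox{tr}(\kappa)\,\nabla w_h^l\cdot n_\Sigma)\circ\bfp$ is the lifted curvature term from \eqref{eq:Delta_bound}. Expanding the product and peeling off the $(1-|\bfB|)$ contribution, the $\tau$-free terms carry the $O(h^2)$ factor and give $Ch^2\tn u_h^l\tn_{\Sigma,k}\tn v_h^l\tn_{\Sigma,k}$ after discarding higher powers of $hk$ under the hypothesis $hk<1$, while every term containing a factor $\tau^w$ is treated by Cauchy--Schwarz, recognising $h\|k^2w_h-\tau^w\|_{\Sigma_h}\sim\tn w_h^l\tn_{GLS}$ and, via Lemma \ref{grad_cont}, $h\|\tau^w\|_{\Sigma_h}\leq C(h\|\nabla_{\Sigma_h}w_h\|_{\Sigma_h}+\tn w_h\tn_{\mcF})$. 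I expect the main obstacle to be precisely this last bookkeeping: because $\tn\cdot\tn_{GLS}$ blends the mass residual $k^2w_h^l$ with the normal-gradient term $\tau^w$, each mixed term has to be routed either into one of the products $\tn u_h^l\tn_{GLS}\tn v_h\tn_{\mcF}$, $\tn v_h^l\tn_{GLS}\tn u_h\tn_{\mcF}$, $\tn u_h\tn_{\mcF}\tn v_h\tn_{\mcF}$ or into the $O(h^2)$ energy term, using $hk<1$ and the norm equivalences to absorb the stray powers of $h$ and $k$ without generating a spurious lower-order contribution.
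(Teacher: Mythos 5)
Your proposal is correct and follows essentially the same route as the paper: the mass and data terms are handled via the $\|1-|\bfB|\,\|_{L^\infty}\leqslant Ch^2$ bound, the stiffness term via the $O(h^2)$ geometric estimate from \cite{OlReGr09,BHL13}, and the non-vanishing surface Laplacian of the lift is controlled through the curvature identity \eqref{eq:Delta_bound} combined with Lemma \ref{grad_cont}, exactly as in the paper's proof. The only cosmetic difference is that you expand the Galerkin least squares consistency error after pulling everything back to $\Sigma_h$ (introducing the lifted curvature term $\tau^w$), whereas the paper performs the corresponding add-and-subtract decomposition on $\Sigma$; the resulting terms and bounds coincide.
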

\begin{proof}
For the first term we observe that
\begin{align}
|a_h(u_h, v_h) - a(u^l_h,
  v^l_h)| 
  &\leqslant  |(\nabla_{\Sigma_h} u_h, \nabla_{\Sigma_h}
  v_h)_{\Sigma_h} - (\nabla_{\Sigma} u^l_h, \nabla_{\Sigma}
  v_h^l)_{\Sigma}  |
  \\ \nonumber
&\qquad  + | (k^2 u_h, v_h)_{\Sigma_h} - (k^2 u_h,  v_h)_{ \Sigma}| 
\\
&\leqslant C h^2 \|\nabla_{\Sigma} u^l_h\|_{\Sigma}  \|\nabla_{\Sigma}
v^l_h\|_{\Sigma} + \int_{\Sigma_h} k^2 u_h \bar v_h (1 - |
\bfB|) \mbox{d}\sigma_h
\end{align}
where we used the result on the Laplace-Beltrami part from \cite{BHL13}.
For the zero order term term we observe that by \eqref{detBbounds}
\begin{equation}\label{low_ord_term}
|\int_{\Sigma_h} k^2 u_h \bar v_h (1 - | \bfB|)
  \mbox{d} \sigma_h| \leqslant C h^2 \| k u_h^l\|_{\Sigma} \| k v^l_h\|_{\Sigma}.
\end{equation}
For the control of the conformity error of the right hand side we observe that
\begin{equation}
l_s(v^l_h) - l_h(v_h) = \int_{\Sigma_h} f_h \bar v_h(1 -
\gamma_s h^2 k^2) (|\bfB|-1) ~\mbox{d} \sigma_h 
- \sum_{K \in
  \mcK_h} \int_{\Sigma \cap K} f
\gamma_s h^2 \Delta_{\Sigma} \bar  v^l_h ~ \mbox{d} \sigma.
\end{equation}
The first term on the right hand side was bounded in
\cite{BHL13},
\begin{equation}
\int_{\Sigma_h} f_e \bar v_h(1 -
\gamma_s h^2 k^2) (|\bfB|-1) ~\mbox{d} \sigma_h \leqslant C_f h^2 \|v_h\|_{\Sigma}.
\end{equation}
Once again we use the relation \eqref{eq:Delta_bound}
and by changing the domain of integration and applying
Lemma \ref{grad_cont} we obtain
\begin{equation}\label{LapBelcont}
\sum_{K \in \mcK_h} h^2 \|\Delta_{\Sigma} v_h^l\|^2_{\Sigma
  \cap K} \leqslant C_{\kappa} h^2 \|\nabla v_h^l\|^2_{\Sigma}\\
\leqslant C_{\kappa} (h^2  \|\nabla_{\Sigma} v_h^l\|^2_{\Sigma} +\tn v_h\tn^2_{\mcF}).
\end{equation}
 Hence the second term may be bounded as
\begin{equation}
\sum_{K \in
  \mcK_h} \int_{\Sigma \cap K} f
\gamma_s h^2 \Delta_{\Sigma} \bar v_h ~ \mbox{d} \sigma \leqslant C_{\kappa} h
\|f\|_{\Sigma} (h \|\nabla_{\Sigma} v_h^l\|_{\Sigma} + \tn v_h\tn_{\mcF}).
\end{equation}
For the Galerkin least squares term we may write
\begin{align}
s_h(u_h, v_h) - s(u^l_h,v^l_h) 
&= (h^2 k^2 u_h, k^2 v_h)_{\Sigma_h} -  (h^2
k^2 u^l_h,k^2 v^l_h)_{\Sigma} 
\\  \nonumber
&\qquad
-\sum_{K \in \mcK_h} (h^2 \Delta_\Sigma u^l_h, \Delta_\Sigma
v^l_h)_{\Sigma\cap K} 
\\ \nonumber
&\qquad 
+ \sum_{K \in \mcK_h} (h^2
\Delta_\Sigma u^l_h,\Delta_\Sigma v^l_h + k^2 v_h^l)_{\Sigma\cap K}
\\ \nonumber
&\qquad
+\sum_{K \in \mcK_h} (h^2(
\Delta_\Sigma u^l_h + k^2 u_h^l), \Delta_\Sigma v^l_h)_{\Sigma\cap K}.
\end{align}
Using the bounds \eqref{low_ord_term} and \eqref{LapBelcont} we have
\begin{gather}
 (h^2 k^2 u_h, k^2 v_h)_{\Sigma_h} -  (h^2
k^2 u^l_h,k^2 v^l_h)_{\Sigma} \leqslant C h^2 (hk)^2 \|k u^l_h\|_{\Sigma} \|k v^l_h\|_{\Sigma}
\\
\sum_{K \in \mcK_h} (h^2 \Delta_\Sigma u^l_h, \Delta_\Sigma
v^l_h)_{\Sigma \cap K}  \leqslant   (h \|\nabla_{\Sigma} u_h^l\|_{\Sigma} +\tn u_h\tn_{\mcF} )(h \|\nabla_{\Sigma} v_h^l\|_{\Sigma} + \tn v_h\tn_{\mcF})
\end{gather}
and
\begin{equation}
\sum_{K \in \mcK_h} (h^2
\Delta_\Sigma u^l_h,\Delta_\Sigma v^l_h + k^2 v_h^l)_{\Sigma\cap K}
\leqslant C  (h \|\nabla_{\Sigma} u_h^l\|_{\Sigma} +\tn u_h\tn_{\mcF})
\tn v_h^l\tn_{GLS}.
\end{equation}
\end{proof}
An immediate consequence of the previous result is the following bounds
on the conformity error of the form $A_h(\cdot,\cdot)$.
\begin{cor}
Let $u_h$ be the solution of \eqref{full_FEM} and assume that
$hk < 1$.
Then for all $\epsilon > 0$
\begin{multline}\label{Abound2}
|A_h(v_h,w_h) - A(v_h^l, w_h^l)| \leqslant C (1+ |\gamma_s|) h^2 |\tn v^l_h \tn_{\Sigma,k} \tn
w^l_h \tn_{\Sigma,k} 
\\
+ C |\gamma_s| \epsilon ( \tn v^l_h \tn_{GLS}^2 + \tn
w^l_h \tn_{GLS}^2)  + C \left( \frac{|\gamma_s|}{4 \epsilon} + 1 \right)
\left(\tn v_h \tn_{\mcF}^2 + \tn
w_h \tn_{\mcF}^2\right).
\end{multline}
\end{cor}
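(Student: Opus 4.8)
The plan is to reduce everything to Lemma~\ref{conf_error} followed by a single application of Young's inequality. Using the definitions $A_h(\cdot,\cdot) = a_h(\cdot,\cdot) + \gamma_s s_h(\cdot,\cdot)$ and $A(\cdot,\cdot) = a(\cdot,\cdot) + \gamma_s s(\cdot,\cdot)$, I would first split
\begin{equation*}
A_h(v_h,w_h) - A(v_h^l,w_h^l) = \bigl(a_h(v_h,w_h) - a(v_h^l,w_h^l)\bigr) + \gamma_s\bigl(s_h(v_h,w_h) - s(v_h^l,w_h^l)\bigr)
\end{equation*}
and estimate the two differences separately with the bounds already established in Lemma~\ref{conf_error}.

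For the first difference the estimate on $a_h - a$ contributes directly a term $C h^2 \tn v_h^l\tn_{\Sigma,k}\tn w_h^l\tn_{\Sigma,k}$. For the second difference I would insert \eqref{stab_conf}; its last summand produces $C|\gamma_s| h^2 \tn v_h^l\tn_{\Sigma,k}\tn w_h^l\tn_{\Sigma,k}$, and adding it to the $a$-contribution, which carries no factor $\gamma_s$, yields the combined factor $(1+|\gamma_s|)$ in front of the $h^2$-term, i.e.\ the first term on the right of \eqref{Abound2}.

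The remaining summands of \eqref{stab_conf}, weighted by $|\gamma_s|$, are the genuinely new part. They are products of a $GLS$-norm and an $\mcF$-norm, namely $|\gamma_s|\tn v_h^l\tn_{GLS}\tn w_h\tn_{\mcF}$ and $|\gamma_s|\tn w_h^l\tn_{GLS}\tn v_h\tn_{\mcF}$, together with the pure jump product $|\gamma_s|\tn v_h\tn_{\mcF}\tn w_h\tn_{\mcF}$. To each of the first two I would apply Young's inequality $ab \leqslant \epsilon a^2 + (4\epsilon)^{-1}b^2$ with the $GLS$-norm playing the role of $a$ and the $\mcF$-norm that of $b$. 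This orientation is the one point where care is needed, since the $GLS$-norms must end up carrying the small weight $\epsilon$ and the jump-norms the weight $(4\epsilon)^{-1}$, exactly as in \eqref{Abound2}; the pure jump product is then handled by the plain arithmetic--geometric mean inequality.

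Collecting the $GLS$-contributions gives $C|\gamma_s|\epsilon(\tn v_h^l\tn_{GLS}^2 + \tn w_h^l\tn_{GLS}^2)$, while the jump-contributions combine into $C\bigl(|\gamma_s|(4\epsilon)^{-1} + 1\bigr)(\tn v_h\tn_{\mcF}^2 + \tn w_h\tn_{\mcF}^2)$, the generic constant absorbing the extra multiple of $|\gamma_s|$ coming from the $\mcF\times\mcF$ term. Since every step is either an inequality already proved in Lemma~\ref{conf_error} or an elementary algebraic manipulation, I expect no real obstacle beyond the bookkeeping of constants and the correct choice of which factor receives the $\epsilon$-weight; this is precisely why the statement is an immediate corollary.
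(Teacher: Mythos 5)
Your proposal is correct and is essentially the paper's own proof, which just says the corollary ``follows directly from the previous lemma, and an arithmetic-geometric inequality'': you split $A_h-A$ into the $a$-difference and $\gamma_s$ times the $s$-difference, invoke Lemma~\ref{conf_error}, and apply Young's inequality so that the $GLS$-norms carry the $\epsilon$-weight and the jump norms the $(4\epsilon)^{-1}$-weight. The one caveat, which your write-up shares with the paper's statement itself, is that the pure jump product $|\gamma_s|\tn v_h\tn_{\mcF}\tn w_h\tn_{\mcF}$ only fits under the coefficient $C\left(\frac{|\gamma_s|}{4\epsilon}+1\right)$ when $\epsilon$ is bounded (say $\epsilon\leqslant 1/4$, which covers the choice of $\epsilon$ made later in Lemma~\ref{stab_convergence}), rather than literally for all $\epsilon>0$, since a generic constant cannot absorb $|\gamma_s|$.
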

\begin{proof} 
Follows directly from the previous lemma, and an arithmetic-geometric inequality.
\end{proof}
The proof of convergence of the fully discrete scheme now follows the
same model as that of the semi-discrete scheme, estimating this time
also the error induced by integrating the equations on the discrete
representation of the surface.
\begin{lem}\label{stab_convergence}
Assume that $\mbox{Im}(\gamma_j) > C_{\Sigma,1} |\gamma_s|^2 Im(\gamma_s)^{-1}  + C_{\Sigma,2}
$, where the
constants $C_{\Sigma,1} $ and $C_{\Sigma,2}$ only depends on the smoothness of the surface. Then,
\begin{equation}
\tn \pi_h^l u - u^l_h \tn_{GLS}^2 + \tn  \pi_h u^e
- u_h \tn_{\mcF} \leqslant  C_{f,\gamma} (hk) + C_{\gamma} h\tn \pi_h^l u - u^l_h \tn_{\Sigma,k}.
\end{equation}
\end{lem}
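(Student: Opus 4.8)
The plan is to reproduce the coercivity-plus-orthogonality argument of Lemma~\ref{stab_conv}, but with exact Galerkin orthogonality replaced by a Strang-type identity in which the geometric inconsistency between $\Sigma_h$ and $\Sigma$ is absorbed through the conformity bounds of Lemma~\ref{conf_error} and \eqref{Abound2}. I set $e_h = \pi_h u^e - u_h \in \mcV_h$, so that $e_h^l = \pi_h^l u - u_h^l$ and both quantities on the left are built from $e_h$. Since $\mbox{Re}(\gamma_s) = \mbox{Re}(\gamma_j) = 0$, the form $a_h(e_h,e_h)$ is real while $s_h$ and $j$ are real, so taking imaginary parts and using the equation \eqref{full_FEM} yields the coercivity identity
\[
\mbox{Im}(\gamma_s)\, s_h(e_h,e_h) + \mbox{Im}(\gamma_j)\, j(e_h,e_h)
= \mbox{Im}\bigl(A_h(\pi_h u^e, e_h) + \gamma_j j(\pi_h u^e, e_h) - l_h(e_h)\bigr).
\]

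Next I would transport the right-hand side to the exact surface. Writing $A_h(\pi_h u^e, e_h) = A(\pi_h^l u, e_h^l) + R_A$ and $l_h(e_h) = l_s(e_h^l) - R_l$, with $R_A$, $R_l$ the conformity remainders estimated by \eqref{Abound2} and the second bound of Lemma~\ref{conf_error}, and using that $j$ only sees the mesh together with $j(u^e, \cdot) = 0$ (smoothness of $u^e$), so that $j(\pi_h u^e, e_h) = j(\pi_h u^e - u^e, e_h)$, the leading continuous contribution becomes $A(\pi_h^l u, e_h^l) - l_s(e_h^l)$. Here I would invoke the continuous consistency $A(u, e_h^l) = l_s(e_h^l)$, which follows from $-\Delta_\Sigma u - k^2 u = f$ and the definition of $l_s$, reducing this to the interpolation-error term $A(\pi_h^l u - u, e_h^l)$.

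This term, together with $\gamma_j j(\pi_h u^e - u^e, e_h)$, is then bounded by the continuity Lemma~\ref{cont_lem2}, applied (using the symmetry of $a$) with the error $e_h^l$ in the first slot and the interpolation error $\pi_h^l u - u$ in the slots where \eqref{interpol} supplies the decay, and by Cauchy--Schwarz on the $s$- and $j$-parts. The approximation estimates \eqref{interpol}, \eqref{interp_GLS}, \eqref{pihlbound} and the regularity \eqref{regularity} turn every factor carrying $u - \pi_h^l u$, $u^e - \pi_h u^e$ or $\pi_h^l u$ into data, producing the contribution $C_{f,\gamma}(hk)$; the pure energy-norm remainder $\tn e_h^l\tn_{\Sigma,k}$, which the conformity bounds control only in the full norm and not in the weaker GLS or jump norms, is carried to the right-hand side as $C_\gamma h\tn e_h^l\tn_{\Sigma,k}$, to be closed later in the energy-norm estimate.

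The main obstacle is the absorption step. After converting the discrete stabilization norm $s_h(e_h,e_h)$ on the left into the continuous $\tn e_h^l\tn_{GLS}^2$ (again via \eqref{stab_conf}, at the cost of further $\tn e_h^l\tn_{GLS}\tn e_h\tn_\mcF$ and $\tn e_h\tn_\mcF^2$ terms), the right-hand side carries non-data terms that must be folded back into the left: a $\tn e_h^l\tn_{GLS}^2$ term with coefficient of size $|\gamma_s|\epsilon$ coming from the $\epsilon$-split in \eqref{Abound2}, a $\tn e_h\tn_\mcF^2$ term with coefficient of size $|\gamma_s|/\epsilon + 1$, and cross terms of size $|\gamma_s|\tn e_h^l\tn_{GLS}\tn e_h\tn_\mcF$ from \eqref{stab_conf} which a further Young split renders as multiples of the two quadratic norms. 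Choosing $\epsilon \sim \mbox{Im}(\gamma_s)/|\gamma_s|$ absorbs the $\tn e_h^l\tn_{GLS}^2$ terms into $\mbox{Im}(\gamma_s)\, s(e_h^l,e_h^l)$, which forces the total $\tn e_h\tn_\mcF^2$ coefficient to the size $C_{\Sigma,1}|\gamma_s|^2\mbox{Im}(\gamma_s)^{-1} + C_{\Sigma,2}$; this is absorbable into $\mbox{Im}(\gamma_j)\, j(e_h,e_h)$ precisely under the stated hypothesis $\mbox{Im}(\gamma_j) > C_{\Sigma,1}|\gamma_s|^2\mbox{Im}(\gamma_s)^{-1} + C_{\Sigma,2}$. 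In other words, the jump penalty must be strong enough to dominate the conformity error of the weakly inconsistent Galerkin--least--squares term, which is exactly what the condition on $\gamma_j$ encodes. Once the quadratic terms are absorbed, the remaining inequality reads
\[
\tn e_h^l\tn_{GLS}^2 + \tn e_h\tn_\mcF^2 \leqslant \bigl(C_{f,\gamma}(hk) + C_\gamma h\tn e_h^l\tn_{\Sigma,k}\bigr)\bigl(\tn e_h^l\tn_{GLS} + \tn e_h\tn_\mcF\bigr),
\]
and dividing through by $\tn e_h^l\tn_{GLS} + \tn e_h\tn_\mcF$ yields the claimed bound.
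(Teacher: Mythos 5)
Your argument is correct and rests on the same pillars as the paper's proof --- imaginary-part coercivity enabled by $\mbox{Re}(\gamma_s)=\mbox{Re}(\gamma_j)=0$, the continuity Lemma~\ref{cont_lem2}, the conformity bounds of Lemma~\ref{conf_error} and \eqref{Abound2}, the choice $\epsilon \sim \mbox{Im}(\gamma_s)/|\gamma_s|$, and absorption of the jump contributions under the stated hypothesis on $\mbox{Im}(\gamma_j)$ --- but your decomposition is genuinely different. The paper takes imaginary parts of the \emph{exact-surface} forms applied to the lifted error, $\mbox{Im}[a(\xi_h^l,\xi_h^l)+\gamma_s s(\xi_h^l,\xi_h^l)+\gamma_j j(\xi_h,\xi_h)]$, so its left-hand side is already the continuous GLS norm appearing in the claim; the price is that expanding via the continuous consistency and the discrete equation leaves the conformity remainder $A_h(u_h,\xi_h)-A(u_h^l,\xi_h^l)$ anchored at the unknown $u_h$, which the paper must then convert to interpolant-plus-error quantities by adding and subtracting $\pi_h u^e$ before invoking \eqref{pihlbound}. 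You instead take imaginary parts of the \emph{discrete} forms $s_h$ and $j$, so your Galerkin-orthogonality step is exact and your conformity remainder $A_h(\pi_h u^e,e_h)-A(\pi_h^l u,e_h^l)$ is anchored at the interpolant, whose norms \eqref{pihlbound} and \eqref{regularity} bound directly by data --- no add--subtract needed; the price is the extra conversion of $s_h(e_h,e_h)$ into $\tn e_h^l\tn_{GLS}^2$ via the diagonal case of \eqref{stab_conf}, which generates additional cross and jump terms that must also be absorbed and therefore slightly enlarge the constants hidden in the condition on $\mbox{Im}(\gamma_j)$. Both routes close identically in substance: your final multiplicative inequality followed by division is equivalent to the paper's quadratic inequality $C_\gamma(\tn\xi_h^l\tn_{GLS}^2+\tn\xi_h\tn_{\mcF}^2)\leqslant C_{f,\gamma}^2(hk)^2+C_\gamma h^2\tn\xi_h^l\tn_{\Sigma,k}^2$ followed by square roots, except that terms of the form $C_f\, hk\cdot h\tn e_h^l\tn_{\Sigma,k}$ (data times the carried energy norm, arising from the second factor in Lemma~\ref{cont_lem2}) do not literally factor as in your displayed inequality; they are handled by Young's inequality in either formulation, so this is a presentational wrinkle rather than a gap.
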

\begin{proof}
  Using the short-hand notation $\pi_h^l u := ((\pi_h u^e)|_{\Sigma_h})^l$, we define the discrete error
  on $\Sigma_h$ and its corresponding lift to $\Sigma$ by
   $\xi_h:=\pi_h u^e
- u_h$ and $\xi_h^l:=\pi_h^l u - u^l_h$, respectively.

Using the definition of the scheme on the exact and the discrete surfaces
we may write
\begin{align}
&\mbox{Im}(\gamma_s) \tn \xi_h^l \tn_{GLS}^2 + \mbox{Im}(\gamma_j) \tn  \xi_h \tn_{\mcF}^2
\\
&\qquad = \mbox{Im}[a(\xi_h^l ,\xi_h^l ) + \gamma_s
s(\xi_h^l ,\xi_h^l) 
+ \gamma_j j(\xi_h ,\xi_h)] 
\\
&\qquad = \mbox{Im}[a(\pi_h^l u - u ,\xi_h^l ) + \gamma_s
s(\pi_h^l u - u ,\xi_h^l) + \gamma_j j(\pi_h u^e ,\xi_h)
\\ \nonumber
&\qquad \qquad 
+ l_s(\xi_h^l) - l_h(\xi_h) 
+ A_h(u_h ,\xi_h ) - A(u^l_h ,\xi_h^l ) ].
\end{align}
For the first three terms in the right hand side we use Lemma
\ref{cont_lem2} together with similar
arguments as for \eqref{error_rep} to obtain the bound
\begin{multline}\label{cont_full_disc}
|a(\pi_h^l u - u ,\xi_h^l ) + \gamma
s(\pi_h^l u - u ,\xi_h^l) + \gamma j(\pi_h u^e ,\xi_h)| \\  \leqslant  C_{f,\gamma}^2   (h k)^2 + h^2
\tn \xi_h^l\tn_{\Sigma,k}^2 + \frac14 (\mbox{Im}(\gamma_s) \tn \xi_h^l \tn_{GLS}^2 + \mbox{Im}(\gamma_j) \tn  \xi_h\tn_{\mcF}^2).
\end{multline}
For the remaining terms we use the result of Lemma \ref{conf_error} to
deduce 
\begin{align}
 l_s(\xi_h^l) - l_h(\xi_h)  
 &\leqslant  C_f (h^2 \tn \xi_h^l \tn_{\Sigma,k} + h\tn \xi_h \tn_{\mcF}) 
\\
&\leq
 C_{f,\gamma_s}^2 (h k)^2
 + h^2 k^{-2} \tn \xi_h^l \tn_{\Sigma,k}^2 + \frac14 \tn \xi_h \tn_{\mcF}^2.
\end{align}
To bound the conformity error of $A_h(\cdot,\cdot)$ it is convenient
to start from \eqref{Abound2} and write
\begin{multline}
A_h(u_h ,\xi_h^l ) - A(u^l_h ,\xi_h^l ) \leqslant C h^2 (1 + |\gamma_s|) |\tn u^l_h \tn_{\Sigma,k} \tn
\xi_h^l \tn_{\Sigma,k} 
\\
+ C |\gamma_s| \epsilon ( \tn u^l_h \tn_{GLS}^2 + \tn
\xi_h^l \tn_{GLS}^2)  + C \left( \frac{|\gamma_s|}{4 \epsilon} + 1 \right)
\left(\tn u_h \tn_{\mcF}^2 + \tn
\xi_h \tn_{\mcF}^2\right).
\end{multline}
By adding and subtracting $\pi_h^l u$ in the norms on $u_h^l$ and $\pi_h
u^e$ in the norms on $u_h$,
applying the triangular inequality and applying the bounds
\eqref{pihlbound} in combination with \eqref{regularity}  we may rewrite this as
\begin{multline}
A_h(u_h ,\xi_h^l ) - A(u^l_h ,\xi_h^l ) \leq
C_{\gamma_s,\epsilon}  h^2 (k^2 \|f\|_{\Sigma}^2 + \tn
\xi_h^l \tn_{\Sigma,k}^2) \\+ C |\gamma_s| \epsilon \tn
\xi_h^l \tn_{GLS}^2  + C \left( \frac{|\gamma_s|}{4
    \epsilon} + 1 \right) \tn
\xi_h\tn_{\mcF}^2.
\end{multline}
Choosing $\epsilon = Im(\gamma_s)  (2 C |\gamma_s|)^{-1}$ and fixing
$\gamma_j$ such that $Im(\gamma_j) > C \left( \frac{|\gamma_s|}{4
    \epsilon} + 1 \right)+1$ there exists constants $C_\gamma, \,C_{f,\gamma} >0$ such that
\begin{equation}
C_\gamma (\tn \xi_h^l \tn_{GLS}^2 + \tn  \xi_h\tn_{\mcF}^2)  \leqslant  C_{f,\gamma}^2 (hk)^2 + C_{\gamma} h^2 \tn \xi_h^l \tn_{\Sigma,k}^2.
\end{equation}
\end{proof}
\begin{lem}\label{L2conf_err}
For the error in the $L^2$-norm there holds
\begin{equation}
\|u - u_h^l\|_{\Sigma} \leqslant C_{f,\gamma} (hk)^2 + C h^2 k \tn
\pi_h^l u - u_h^l\tn_{\Sigma,k}.
\end{equation}
\end{lem}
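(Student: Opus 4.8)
The plan is to repeat the duality argument of the proof of Theorem~\ref{semi_conv}, but now carrying along the conformity errors committed by integrating over $\Sigma_h$ instead of $\Sigma$. I set $e := u - u_h^l$ and let $z \in H^1(\Sigma)$ solve the adjoint problem $a(v,z) = (v,e)_\Sigma$ for all $v \in H^1(\Sigma)$. By the Hermitian symmetry of $a(\cdot,\cdot)$ this is the Helmholtz problem with data $e$, so that $\|e\|_\Sigma^2 = a(e,z)$ and the regularity estimate \eqref{regularity} yields $|z|_{2,\Sigma} \leqslant C k \|e\|_\Sigma$, $|z|_{1,\Sigma} \leqslant C \|e\|_\Sigma$ and $\|k z\|_\Sigma \leqslant C\|e\|_\Sigma$. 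Inserting the lifted interpolant I split
\[
\|e\|_\Sigma^2 = a(e, z - \pi_h^l z) + a(e, \pi_h^l z).
\]

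Next I derive the fully discrete counterpart of Galerkin orthogonality. Since $\Delta_\Sigma u + k^2 u = -f$ and $j(u^e,\cdot) = 0$, the exact solution satisfies $A(u,v^l) + \gamma_j j(u^e, v) = l_s(v^l)$ for every $v \in \mcV_h$; subtracting the scheme \eqref{full_FEM}, after re-expressing it on $\Sigma$ and adding and subtracting $A(u_h^l, v^l)$, gives
\[
a(e, v^l) = R(v) - \gamma_s s(e, v^l) - \gamma_j j(u^e - u_h, v),
\]
where $R(v) := \bigl(l_s(v^l) - l_h(v)\bigr) - \bigl(A(u_h^l, v^l) - A_h(u_h,v)\bigr)$ collects the conformity errors of the right-hand side and of the bilinear form. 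Choosing $v = \pi_h z^e$, so that $v^l = \pi_h^l z$, and substituting into the splitting yields
\[
\|e\|_\Sigma^2 = a(e, z - \pi_h^l z) + R(\pi_h z^e) - \gamma_s s(e, \pi_h^l z) - \gamma_j j(u^e - u_h, \pi_h z^e).
\]

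I then bound the four contributions. The term $a(e, z - \pi_h^l z)$ is estimated with the continuity Lemma~\ref{cont_lem2} and the interpolation bounds \eqref{interpolstandard} and \eqref{interpol} applied to $z$, which give the factors $\|h^{-1}(z - \pi_h^l z)\|_\Sigma \leqslant C hk \|e\|_\Sigma$ and $\bigl(\sum_K \|h^{-\frac12}(z - \pi_h^l z)\|_{\partial K \cap \Sigma}^2\bigr)^{\frac12} \leqslant C hk \|e\|_\Sigma$. The stabilization and jump terms are handled by Cauchy-Schwarz using $\tn \pi_h^l z\tn_{GLS} \leqslant C hk\|e\|_\Sigma$ and $\tn \pi_h z^e\tn_{\mcF} \leqslant C hk\|e\|_\Sigma$ (again from Lemma~\ref{lem:approx}, since the smooth function $z^e$ carries no gradient jumps), while $R(\pi_h z^e)$ is controlled through Lemma~\ref{conf_error} together with $\tn \pi_h^l z\tn_{\Sigma,k} \leqslant C \|e\|_\Sigma$. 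In each of these bounds the norms of $u_h^l$ and of $u_h$ are split by the triangle inequality into an interpolant part, controlled by $\tn \pi_h^l u \tn_{\Sigma,k} \leqslant C_f$ and $\tn \pi_h^l u \tn_{GLS} + \tn \pi_h u^e \tn_{\mcF} \leqslant C_f h$ from \eqref{pihlbound}, and a discrete error part, for which Lemma~\ref{stab_convergence} supplies $\tn \xi_h^l \tn_{GLS} + \tn \xi_h \tn_{\mcF} \leqslant C_{f,\gamma}(hk) + C_\gamma h \tn \xi_h^l \tn_{\Sigma,k}$ with $\xi_h^l = \pi_h^l u - u_h^l$ and $\xi_h = \pi_h u^e - u_h$. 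Every resulting product then takes the form $C_{f,\gamma}(hk)^2 \|e\|_\Sigma + C_\gamma h^2 k \tn \xi_h^l \tn_{\Sigma,k}\|e\|_\Sigma$, and dividing by $\|e\|_\Sigma$ (using $k \geqslant 1$ to absorb lower powers such as $h^2 k$ into $(hk)^2$) gives the claim.

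I expect the bookkeeping of $R(\pi_h z^e)$ to be the main obstacle: one must check that each piece produced by Lemma~\ref{conf_error} separates into a part proportional to $(hk)^2$ and a part proportional to $h^2 k \tn \xi_h^l \tn_{\Sigma,k}$. In particular the factors $\tn u_h^l \tn_{GLS}$ and $\tn u_h \tn_{\mcF}$ arising from the Galerkin least squares conformity error must be bounded through Lemma~\ref{stab_convergence} rather than through the a priori uncontrolled energy norm of $u_h^l$; keeping all estimates bilinear, i.e. as products of norms rather than passing to the squared form of the Corollary~\eqref{Abound2}, is precisely what makes the $L^2$ power counting close.
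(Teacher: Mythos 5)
Your proposal is correct and follows essentially the same route as the paper: the same duality splitting $\|e\|_\Sigma^2 = a(e, z-\pi_h^l z) + a(e,\pi_h^l z)$, with the second term rewritten via the consistency of the semi-discrete form and the definition of \eqref{full_FEM} into exactly the conformity terms $l_s(\pi_h^l z)-l_h(\pi_h z^e)$ and $A_h(u_h,\pi_h z^e)-A(u_h^l,\pi_h^l z)$ plus the stabilization terms, then estimated with Lemma~\ref{cont_lem2}, Lemma~\ref{conf_error}, the dual interpolation bounds \eqref{stab_dual1}--\eqref{stab_dual2} and \eqref{pihlbound}, and finally Lemma~\ref{stab_convergence}. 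Your closing remark about keeping the conformity estimates in bilinear (product) form rather than the squared form of \eqref{Abound2} matches what the paper actually does in this proof.
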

\begin{proof}
We let $z$ be the solution of \eqref{weak_Helmholtz} with right hand
side $f =  u - u_h^l$. It follows that
\begin{equation}
\|u - u_h^l\|^2_{\Sigma}  = a( u - u_h^l, z - \pi^l_h
z) + a(u - u_h^l, \pi^l_h z) = I +II.
\end{equation}
By the continuity of $a(\cdot,\cdot)$ (Lemma \ref{cont_lem2}), the approximation properties of
$\pi^l_h z $ and the regularity estimate \eqref{regularity}  we have for the first term
\begin{equation}\label{boundI}
I \leqslant C h k  (\tn u - u_h^l \tn_{GLS} + \tn u_h \tn_{\mcF}) \|u - u_h^l\|_{\Sigma}.
\end{equation}
Using the definition of the finite element methods \eqref{semi_FEM}
and \eqref{full_FEM} we have for the second term
\begin{multline}
II = l_s(\pi^l_h z) - l_h(\pi_h z^e) + A_h(u_h,\pi_h z^e) -
A(u^l_h,\pi^l_h z) \\
- \gamma_s s(u - u_h^l, \pi^l_h z) + \gamma_j j( u_h, \pi_h z^e).
 \end{multline}
Using Lemma \ref{conf_error} in the two first terms and the
Cauchy-Schwarz intequality in the two last we have
\begin{align}
II &\leqslant C_{\gamma_s} \Big( h^2  \tn\pi^l_h z\tn_{\Sigma,k} + h \tn \pi_h
z^e\tn_{\mcF} + h^2 \tn u^l_h\tn_{\Sigma,k} \tn
\pi^l_h
z\tn_{\Sigma,k} 
\\ \nonumber 
&\qquad \qquad + (\tn u^l_h\tn_{GLS} +\tn u_h\tn_{\mcF} ) \tn
\pi_h
z^e\tn_{\mcF} 
+  \tn u_h\tn_{\mcF} \tn \pi^l_h
z\tn_{GLS}\Big)
\\
&\qquad + |\gamma_s| \tn u - u_h^l\tn_{GLS}\tn \pi^l_h z \tn_{GLS} \|
+|\gamma_j|\tn u^e -
u_h\tn_{\mcF} \tn \pi_h z^e \tn_{\mcF}.
\end{align}
Recalling the equations \eqref{stab_dual1} and \eqref{stab_dual2} and
\eqref{pihlbound} we
have the bounds
\begin{equation}
\tn \pi^l_h z \tn_{GLS} + \tn \pi_h z^e \tn_{\mcF} \leqslant C (hk)
\|u - u^l_h\|_{\Sigma}, \quad  \tn \pi^l_h z\tn_{\Sigma,k} \leq
C \|u - u^l_h\|_{\Sigma}
\end{equation}
and similarly
\begin{equation}
\tn \pi^l_h u \tn_{GLS} + \tn \pi_h u^e \tn_{\mcF} \leqslant C_f
(hk).
\end{equation}
Adding and subtracting $\pi_h^l u$ in all the norms on $u_h^l$ and
using a triangle inequality and the above bounds on norms of  $\pi^l_h z$ and
$\pi_h^l u$ we arrive at the bound
\begin{equation}\label{boundII}
II \leqslant C_{\gamma_s} ((hk)^2 + (h^2 k)   \|\pi_h^l u - u_h\|_{\Sigma}
+ hk (\tn \pi^l_h u - u_h^l\tn_{GLS} + \tn \pi_h u^e - u_h\tn_{\mcF}).
\end{equation}
By summing up the bounds \eqref{boundI} and \eqref{boundII} we arrive
at the inequality
\begin{align}
\|u - u_h^l\|_{\Sigma} &\leqslant C\Big( (hk)^2 +  h k (\tn \pi_h^l u - u_h^l \tn_{GLS} + \tn \pi_h u^e - u_h \tn_{\mcF})  
\\  \nonumber
&\qquad 
+  h^2k \tn
\pi_h^l u - u_h^l \tn_{\Sigma,k}\Big).
\end{align}
Using the result of Lemma \ref{stab_convergence} the conclusion follows.
\end{proof}
We now use the above lemmas for the fully discrete formulation to
prove our main result, an  apriori error estimate in the $\tn\cdot
\tn_{\Sigma,k}$-norm. This result may then be used to prove stability
of the discrete solution under the condition $hk$ small, similarly as
in Lemma \ref{apriori_bound}. We leave the details to the reader.
\begin{thm}
Let $u$ be the solution of \eqref{weak_Helmholtz} satisfying the
estimate \eqref{regularity} and let $u_h$ be the solution of
\eqref{full_FEM}.
Then for $hk$ sufficiently small
\begin{equation}
\tn u - u_h^l\tn_{\Sigma,k}  \leqslant C_{f,\gamma} hk ( 1 + hk^2).
\end{equation}
\end{thm}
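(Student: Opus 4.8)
\noindent The plan is to mirror the semi-discrete argument of Theorem~\ref{semi_conv}, now carrying the geometric conformity errors through, and to close the estimate by absorption once $hk$ is small. Writing the discrete error as $\xi_h := \pi_h u^e - u_h \in \mcV_h$ and its lift as $\xi_h^l := \pi_h^l u - u_h^l$, I would first split
\[
\tn u - u_h^l\tn_{\Sigma,k} \leqslant \tn u - \pi_h^l u\tn_{\Sigma,k} + \tn\xi_h^l\tn_{\Sigma,k},
\]
and dispose of the interpolation part at once by \eqref{interpol} together with the regularity bound $|u|_{2,\Sigma}\leqslant Ck\|f\|_\Sigma$ from \eqref{regularity}, which gives $\tn u - \pi_h^l u\tn_{\Sigma,k}\leqslant C hk(1+h^2k^2)\|f\|_\Sigma\leqslant C hk(1+hk^2)\|f\|_\Sigma$ since $h<1$. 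It then remains to bound $\tn\xi_h^l\tn_{\Sigma,k}$.

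For this I would use the G\aa rding-type splitting
\[
\tn\xi_h^l\tn_{\Sigma,k}^2 = a(\xi_h^l,\xi_h^l) + 2\|k\xi_h^l\|_\Sigma^2 = \mbox{Re}\big(A(\xi_h^l,\xi_h^l)+\gamma_j j(\xi_h,\xi_h)\big) + 2\|k\xi_h^l\|_\Sigma^2,
\]
valid because $\mbox{Re}(\gamma_x)=0$ makes the stabilization contributions purely imaginary. Inserting the discrete equation \eqref{full_FEM} and the consistency of the exact-surface problem (with $j(u^e,\cdot)=0$) into $A(\xi_h^l,\xi_h^l)+\gamma_j j(\xi_h,\xi_h)$ produces exactly the same three groups as in Lemma~\ref{stab_convergence}: a continuity/interpolation group $a(\pi_h^l u - u,\xi_h^l)+\gamma_s s(\pi_h^l u - u,\xi_h^l)+\gamma_j j(\pi_h u^e,\xi_h)$, the right-hand side conformity $l_s(\xi_h^l)-l_h(\xi_h)$, and the form conformity $A_h(u_h,\xi_h)-A(u_h^l,\xi_h^l)$. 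Since $|\mbox{Re}(z)|\leqslant|z|$, the bounds already assembled apply verbatim: the first group by Lemma~\ref{cont_lem2} and interpolation as in \eqref{cont_full_disc}, the second by Lemma~\ref{conf_error}, and the third by the conformity bound \eqref{Abound2} after adding and subtracting $\pi_h^l u$ and invoking \eqref{pihlbound}. This yields
\[
\tn\xi_h^l\tn_{\Sigma,k}^2 \leqslant C_{f,\gamma}(hk)^2 + C h^2\tn\xi_h^l\tn_{\Sigma,k}^2 + C\big(\tn\xi_h^l\tn_{GLS}^2 + \tn\xi_h\tn_{\mcF}^2\big) + 2\|k\xi_h^l\|_\Sigma^2.
\]

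The last step is to close the loop. The stabilization group is controlled by Lemma~\ref{stab_convergence} in the squared form established in its proof, namely $\tn\xi_h^l\tn_{GLS}^2+\tn\xi_h\tn_{\mcF}^2 \leqslant C_{f,\gamma}(hk)^2 + C_\gamma h^2\tn\xi_h^l\tn_{\Sigma,k}^2$; and for the $L^2$ term I would write $\|\xi_h^l\|_\Sigma\leqslant\|u-\pi_h^l u\|_\Sigma+\|u-u_h^l\|_\Sigma$ and apply Lemma~\ref{L2conf_err}, so that $2\|k\xi_h^l\|_\Sigma^2\leqslant C_{f,\gamma}(hk)^2(1+hk^2)^2 + C(hk)^4\tn\xi_h^l\tn_{\Sigma,k}^2$. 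Every feedback term on $\tn\xi_h^l\tn_{\Sigma,k}^2$ then carries a coefficient that is a positive power of $hk$ (namely $h^2$ or $(hk)^4$), so for $hk$ sufficiently small they are absorbed into the left-hand side, leaving $\tn\xi_h^l\tn_{\Sigma,k}\leqslant C_{f,\gamma}hk(1+hk^2)$; combining with the interpolation bound gives the claim. The main obstacle is precisely this closing argument: unlike the semi-discrete case, Lemmas~\ref{stab_convergence} and~\ref{L2conf_err} are not self-contained, but feed $\tn\xi_h^l\tn_{\Sigma,k}$ back into the estimate. One must check that each such feedback coefficient is a genuine power of $hk$ so the absorption is legitimate, and it is exactly this bookkeeping that forces the hypothesis ``$hk$ sufficiently small'' rather than merely $hk<1$.
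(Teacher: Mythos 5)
Your proposal is correct and follows essentially the same route as the paper's own proof: the same triangle-inequality split, the same G\aa rding-type identity using $\mbox{Re}(\gamma_x)=0$, the same error representation into the continuity group, the right-hand-side conformity $l_s(\xi_h^l)-l_h(\xi_h)$, and the form conformity $A_h(u_h,\xi_h)-A(u_h^l,\xi_h^l)$, closed by the squared form of Lemma~\ref{stab_convergence}, Lemma~\ref{L2conf_err}, and absorption for $hk$ small. If anything, your bookkeeping is slightly cleaner than the paper's, since including $\gamma_j j(\xi_h,\xi_h)$ explicitly in the G\aa rding identity accounts for the $\gamma_j j(u_h,\xi_h)$ term that the paper's error representation introduces but never explicitly bounds.
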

\begin{proof}
First we observe that by the triangle inequality there holds
\begin{equation}
\tn u - u_h^l\tn_{\Sigma,k} \leqslant \tn \pi_h^l u - u_h^l\tn_{\Sigma,k}+\tn u - \pi_h^l u\tn_{\Sigma,k}.
\end{equation}
Since the bound was proven for the second term in the right
hand side in Lemma \ref{lem:approx} we only need to consider 
the first term. Once again we use the notation $\xi_h^l := \pi_h^l u -
u_h^l$ and $\xi_h := \pi_h u^e - u_h$.

It follows by the definition of $A(\cdot,\cdot)$ and the assumption
that $Re[\gamma_s]=Re[\gamma_j]=0$ that
\begin{equation}\label{sigmakbound}
\tn \xi_h^l\tn_{\Sigma,k}^2 = 
\mbox{Re}(A(\xi_h^l,\xi_h^l)) + 2 \|k(\xi_h^l)\|_\Sigma^2
\leqslant  |A(\xi_h^l,\xi_h^l)|+ 2 \|k \xi_h^l\|_\Sigma^2
\end{equation}
and
\begin{equation}
A(\xi_h^l,\xi_h^l) =  A(\pi_h^l u - u,\xi_h^l) + l_s(\xi_h^l) - l_h(\xi_h) \\
+ A_h(u_h, \xi_h) - A(u_h^l,\xi_h^l)+ \gamma_j
j(u_h,\xi_h).
\end{equation}
Using the result of \eqref{cont_full_disc} we have
\begin{equation}\label{Asemicont}
A(\pi_h^l u - u,\xi_h^l) \leqslant C_{f,\gamma_s}^2   (h k)^2 + h^2
\tn \xi_h^l\tn_{\Sigma,k}^2 \\
+ \frac14 (\mbox{Im}(\gamma_s) \tn \xi_h^l \tn_{GLS}^2 + \mbox{Im}(\gamma_j) \tn  \xi_h\tn_{\mcF}^2).
\end{equation}
Recalling the second bound of Lemma \ref{conf_error}  we also have
\begin{multline}\label{lllbound}
 l_s(\xi_h^l) - l_h(\xi_h) \leqslant C_f (hk)^{2}  + C h^2k^{-1}  \tn\pi_h^l u
 -u_h^l\tn_{\Sigma,k}^2 +   h \tn
\xi_h\tn^2_{\mcF} \\
\leqslant  C_f (hk)^{2}  + C h^2 \tn\pi_h^l u
 -u_h^l\tn_{\Sigma,k}^2.
\end{multline}
Finally, using \eqref{Abound2} and after adding and subtracting $\pi_h^l u$ and applying the
triangular inequality and the result of Lemma \ref{stab_convergence}
we have
\begin{align}\label{AAbound}
A_h(u_h, \xi_h) - A(u_h^l,\xi_h^l) &\leqslant C h^2 \tn
\xi_h^l \tn^2_{\Sigma,k} + C \tn
\xi_h^l \tn_{GLS}^2 + C\tn
\xi_h \tn_{\mcF}^2
\\ \nonumber
&\qquad + C h^2 (\tn \pi_h^l u \tn^2_{\Sigma,k} 
+ \tn \pi_h^l u \tn_{GLS}^2) 
\\
& \leqslant C_f (hk)^2 + C h^2 \tn
\xi_h^l\tn^2_{\Sigma,k}.
\end{align}
Applying the results of \eqref{Asemicont}, \eqref{lllbound},
\eqref{AAbound}, and the $L^2$-error estimate of Lemma \ref{L2conf_err}
in \eqref{sigmakbound} we obtain
\begin{equation}
\tn \pi_h^l u - u_h^l\tn^2_{\Sigma,k} \leqslant C_f (1+h^2 k^4) (hk)^2 + C h^2(1+k^2) \tn\pi_h^l u
 -u_h^l\tn_{\Sigma,k}^2.
\end{equation}
Since $hk$ is assumed to be small, so that $Ch^2(1+k^2)<1$, the last term in the 
right hand side can be absorbed in the left hand side and the proof is complete.
\end{proof}
\section{Numerical Examples}

In the numerical examples below, the $L_2$ errors on the exact surface are approximated 
by the corresponding expression on the discrete surface,
\begin{equation}
\|u - u_h^l\|_{\Sigma} \approx \|u^e - u_h\|_{\Sigma_h}.
\end{equation}

\subsection{Varying Wave Number}
We consider the sphere with radius $r=1/2$ and the following stabilization parameters: $\gamma_s=i$, $\gamma_j= 10^{-3}i$, with $i$ the imaginary unit.
We use a fabricated solution 
\begin{equation}\label{sol}
u=(x-1/2)(y-1/2)(z-1/2)
\end{equation}
and construct the right-hand side accordingly. In Fig. \ref{fig:sphere} we show a typical discretization and corresponding approximate solution. In Fig. \ref{fig:sphereconv} we 
show the convergence patterns for different wave numbers and note that the rate is unaffected. 

\subsection{Varying Geometry}
In this example, we consider the spheroid with one main axis having length $\text{R}_\text{max}=1/2$ constant and the other with length $\text{R}_\text{min}$ varying.
The data are the same as in the previous example but with constant wave number $k^2=1$. In Fig. \ref{fig:spheroids} we show two different spheroids and in Fig. \ref{fig:cubeconv} we show the convergence which is optimal independent of geometry. Finally, in Fig. \ref{fig:cube}, we consider a more demanding geometry, defined as the zero isoline of
\[
\phi = (x^2+y^2-4)^2+(z^2-2)^2+(y^2+z^2-4)^2+(x^2-1)^2+(z^2+x^2-4)^2+(y^2-1)^2-15
\]
and in Fig. \ref{fig:spheroideconv} the corresponding observed
convergence using the same parameters as for the spheroids. Similarly
as in the previous example we here observe that the rate is unaffected
by the geometry.
\subsection{Stability Close to Eigenvalues}\label{sec:stab_close_eigenval}
To illustrate the enhanced stability of the stabilized method, we consider the unit sphere (of radius 1). On this sphere, the non--zero eigenvalues of the Laplace--Beltrami operator can be analytically computed as
$\lambda = m (m+1)$, $m=1,2,\ldots$ \cite{Shu01}. We consider again the exact solution (\ref{sol}) and compute the $L_2$ error on a fixed mesh under varying $k^2$ close to the lowest eigenvalue. In Fig. \ref{fig:eigen1} we show how the error behaves using the same stabilization parameters as above.
In Fig. \ref{fig:eigen2} we give a close-up of the error closer to the
eigenvalue, and in Fig. \ref{fig:eigen3} we give the corresponding
errors without stabilization. Note that further closeups would result
in further increases of the error for the unstabilized approximation. With stabilization, the $L_2$ error increases but remains bounded as we pass the eigenvalue, unlike the case where no stabilization is added. Note that resonance occurs, in the unstabilized method, for a $k^2$--value slightly higher than $k^2=2$, which is to be expected in a conforming Galerkin finite element method (cf., e.g, \cite{StrFix73}).

\paragraph{Acknowledgment} This research was supported by: EPSRC,
Award No. EP/J002313/1 and EP/P01576X/1, (EB);  The Foundation for Strategic Research, Grant No.\ AM13-0029 (PH, ML, 
and AM); and the Swedish Research Council, Grants Nos.\  2013-4708, 2017-03911, (ML) and
2017-05038 (AM).

\bibliographystyle{abbrv}
\bibliography{LapBel}
\newpage
\begin{figure}
\centering
\includegraphics[width=0.7\linewidth]{./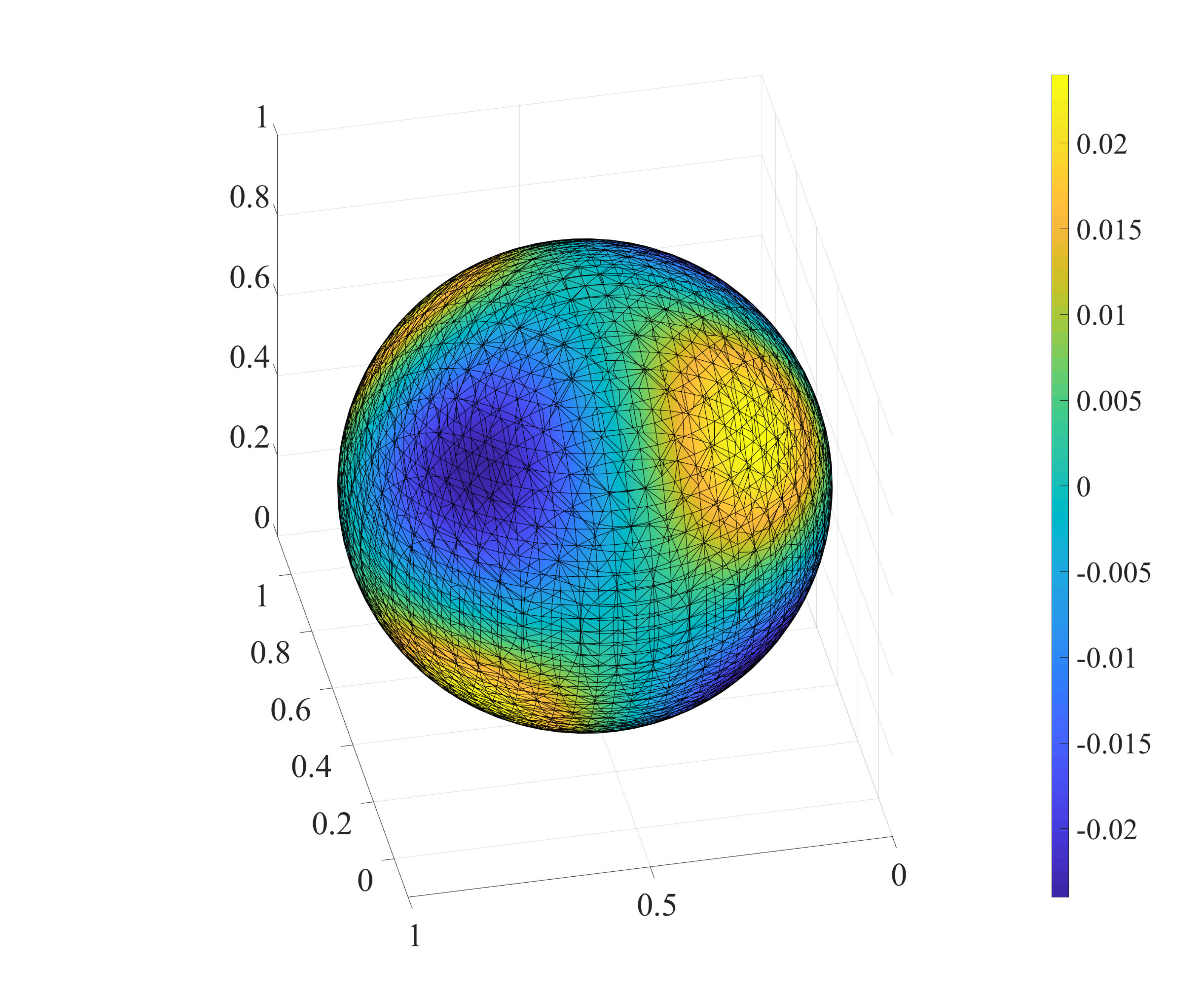}
\caption{A discretization of the sphere with corresponding discrete solution.}\label{fig:sphere}
\end{figure}
\begin{figure}
\centering
\includegraphics[width=0.7\linewidth]{./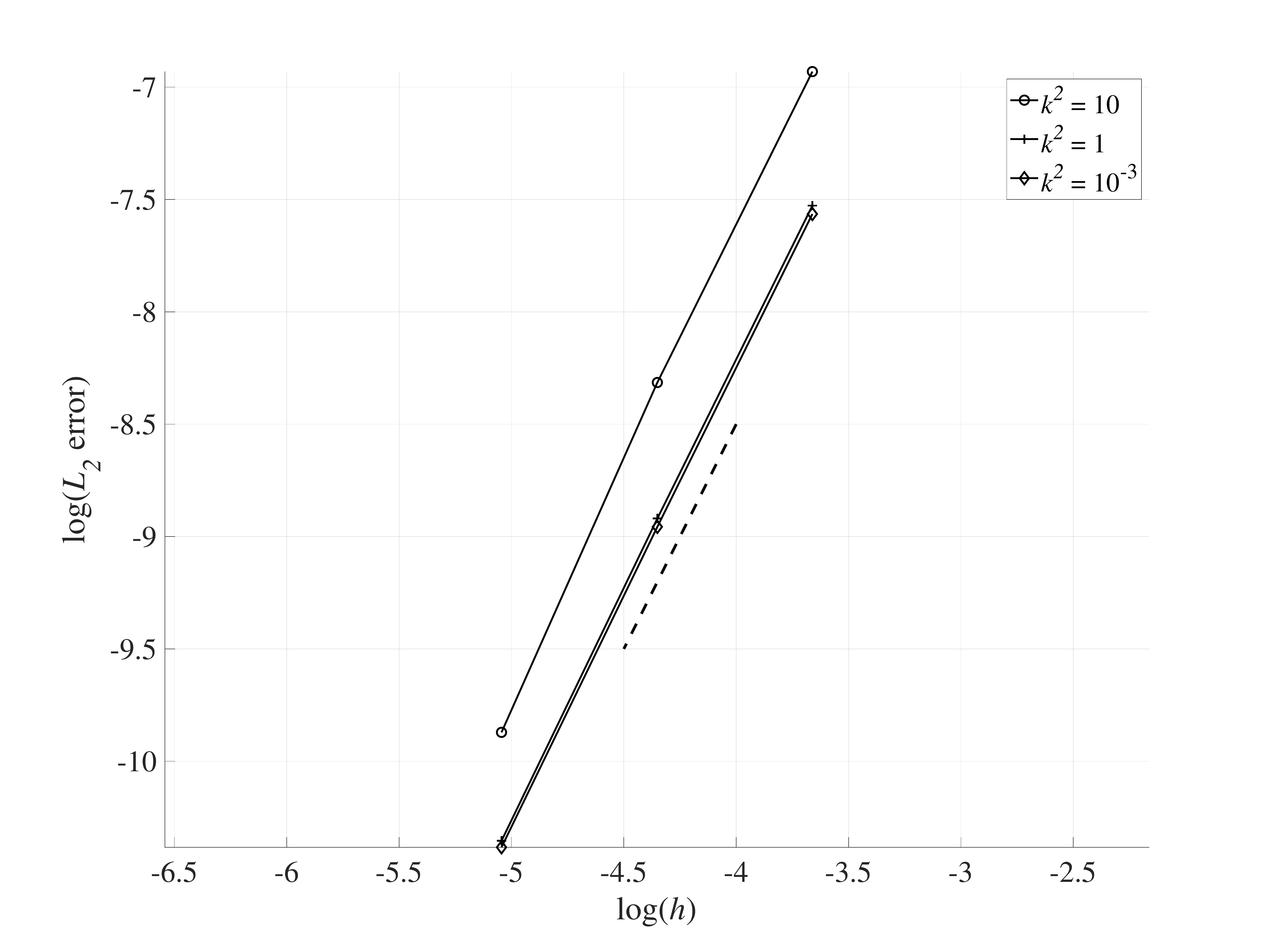}
\caption{Convergence for different wave numbers. Dotted line has inclination 2:1.}\label{fig:sphereconv}
\end{figure}
\begin{figure}
\centering
\includegraphics[width=0.7\linewidth]{./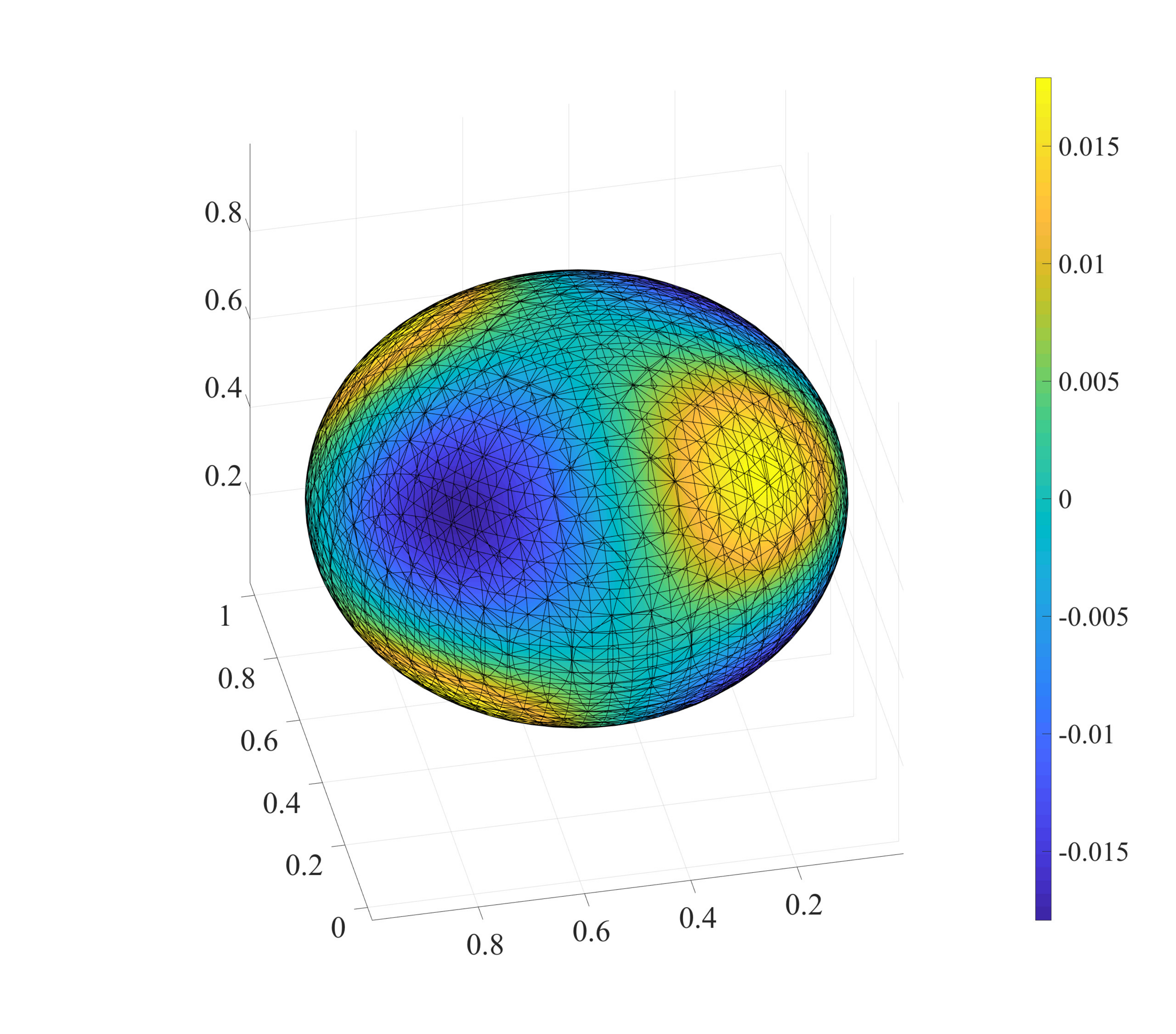}
\includegraphics[width=0.7\linewidth]{./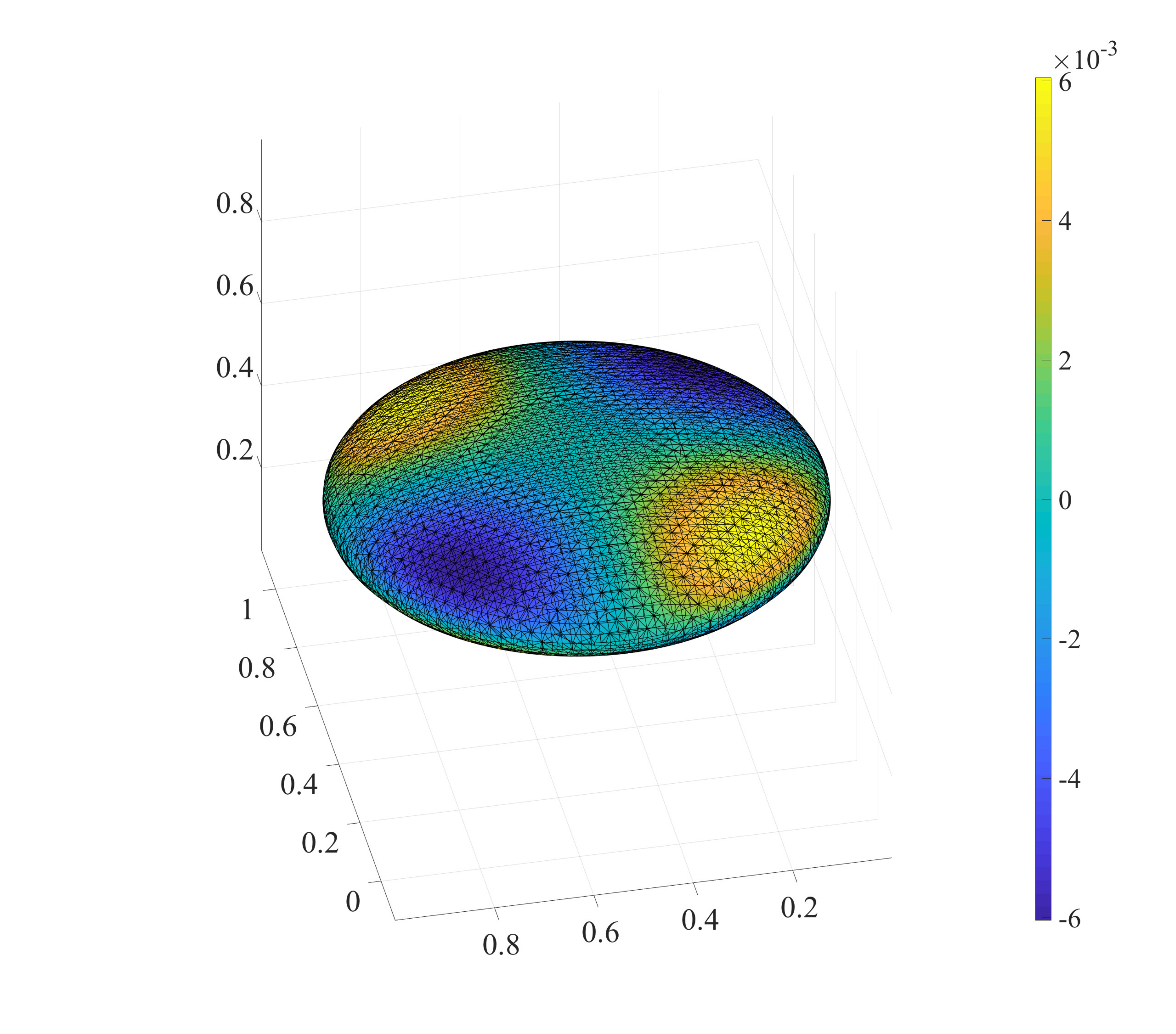}
\caption{Discretization of spheroids with corresponding discrete solutions.}\label{fig:spheroids}
\end{figure}

\begin{figure}
\centering
\includegraphics[width=0.7\linewidth]{./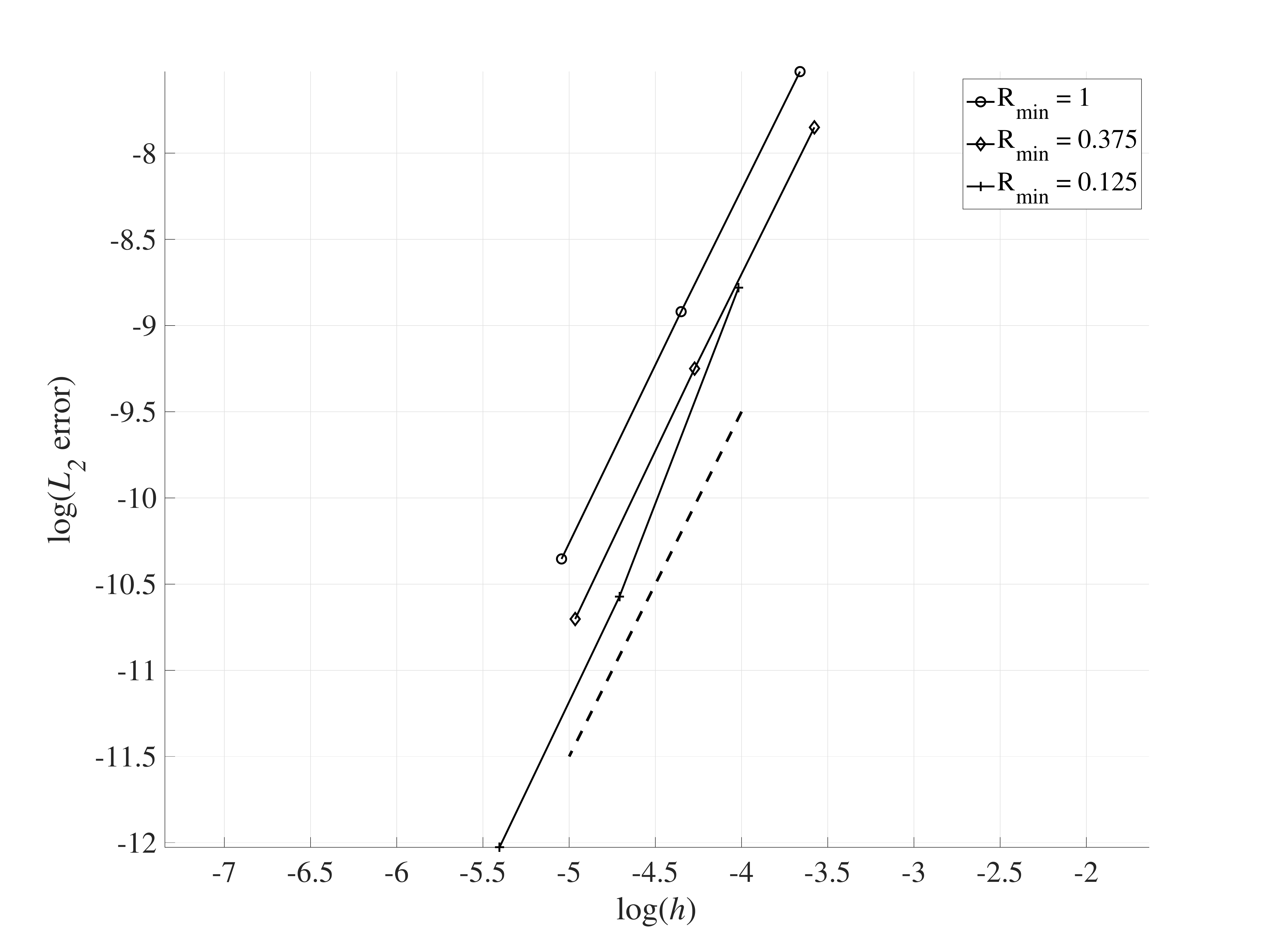}
\caption{Convergence for different spheroid geometries. Dotted line has inclination 2:1.}\label{fig:spheroideconv}
\end{figure}

\begin{figure}
\centering
\includegraphics[width=0.7\linewidth]{./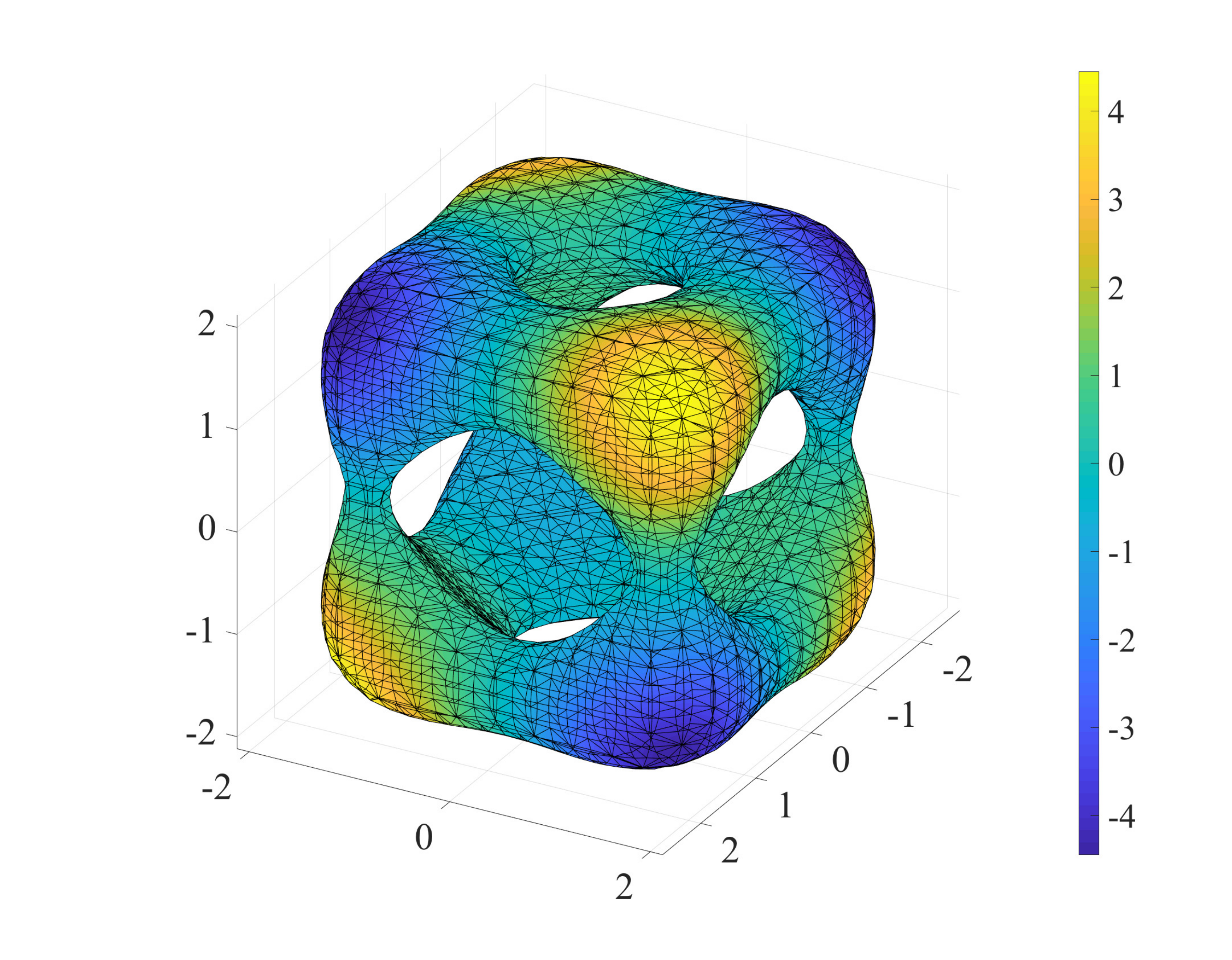}
\caption{Discretization of a more complex geometry with corresponding discrete solution.}\label{fig:cube}
\end{figure}

\begin{figure}
\centering
\includegraphics[width=0.7\linewidth]{./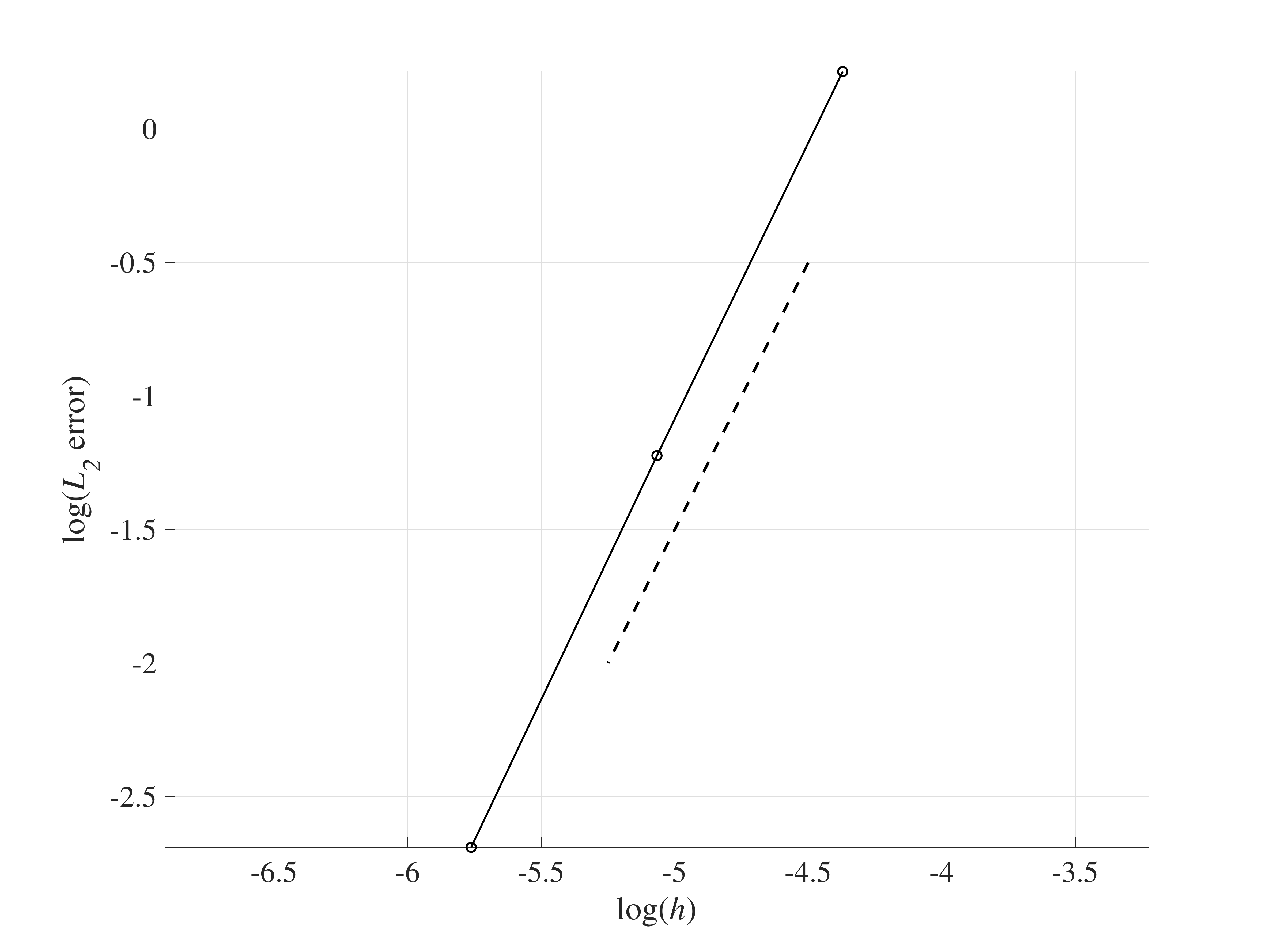}
\caption{Convergence for the geometry of Fig. \ref{fig:cube}. Dotted line has inclination 2:1.}\label{fig:cubeconv}
\end{figure}

\begin{figure}
\centering
\includegraphics[width=0.7\linewidth]{./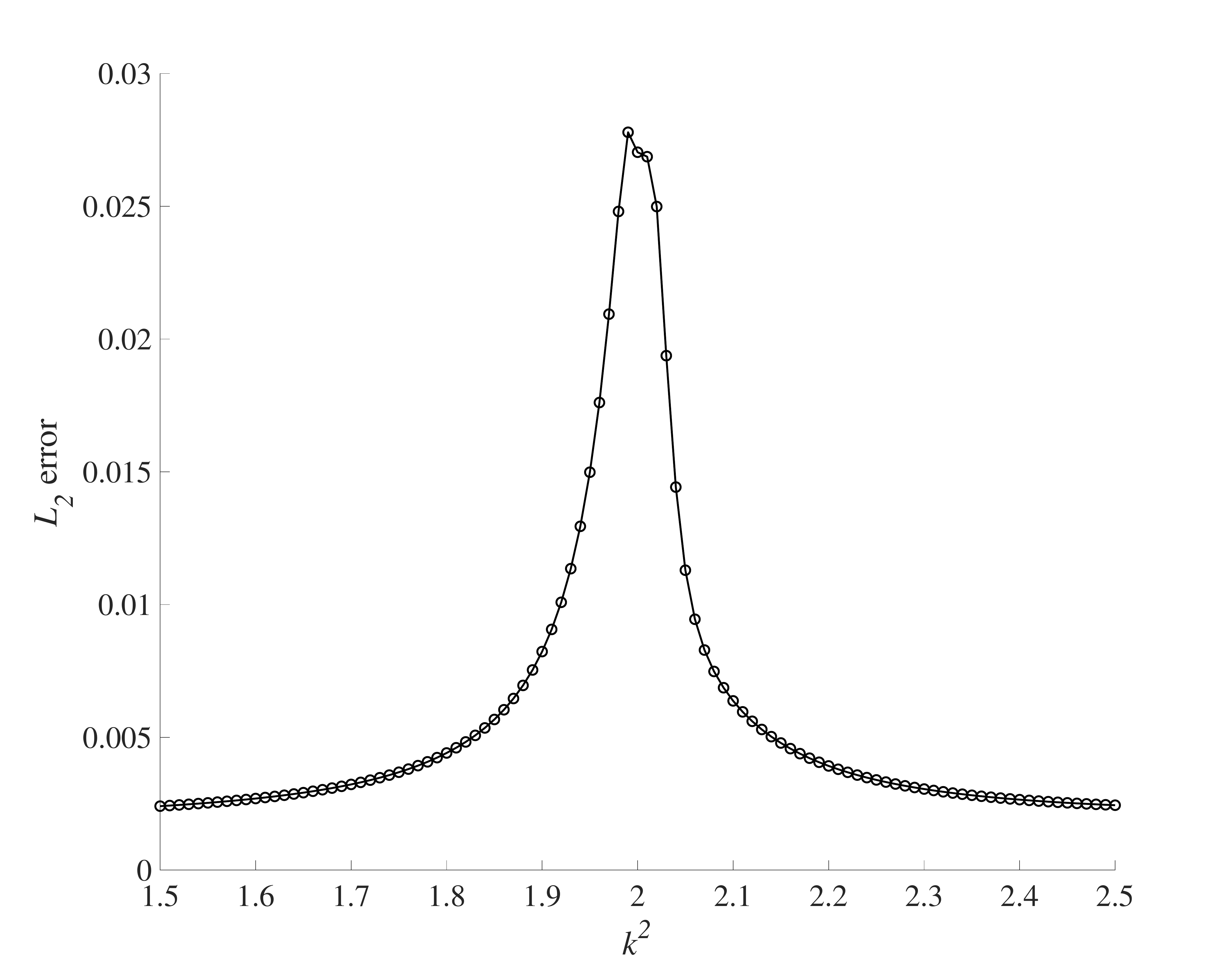}
\caption{Error close to the lowest eigenvalue of the Laplace--Beltrami operator.}\label{fig:eigen1}
\end{figure}
\begin{figure}
\centering
\includegraphics[width=0.7\linewidth]{./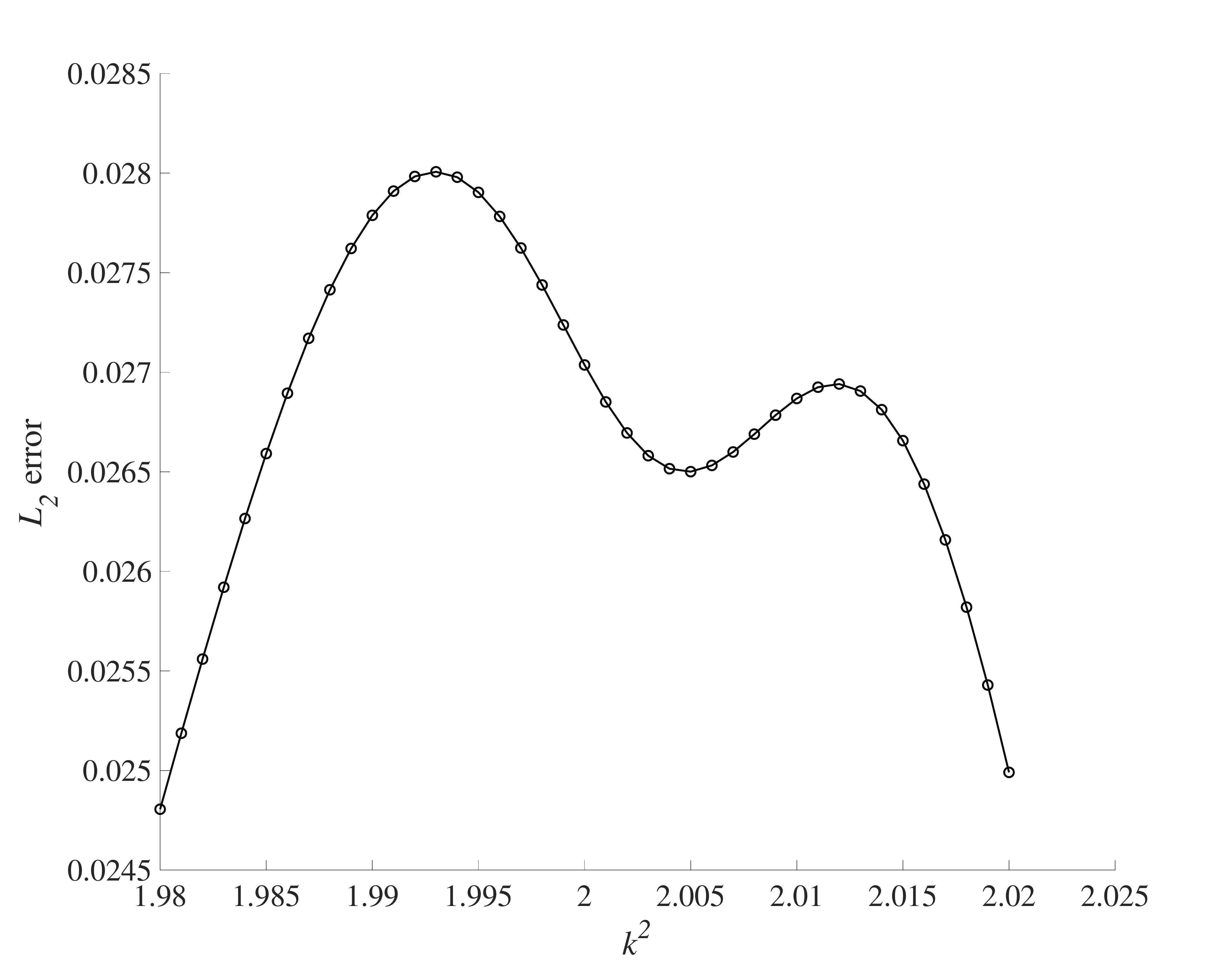}
\caption{Close-up of the error, with stabilization.}\label{fig:eigen2}
\end{figure}
\begin{figure}
\centering
\includegraphics[width=0.7\linewidth]{./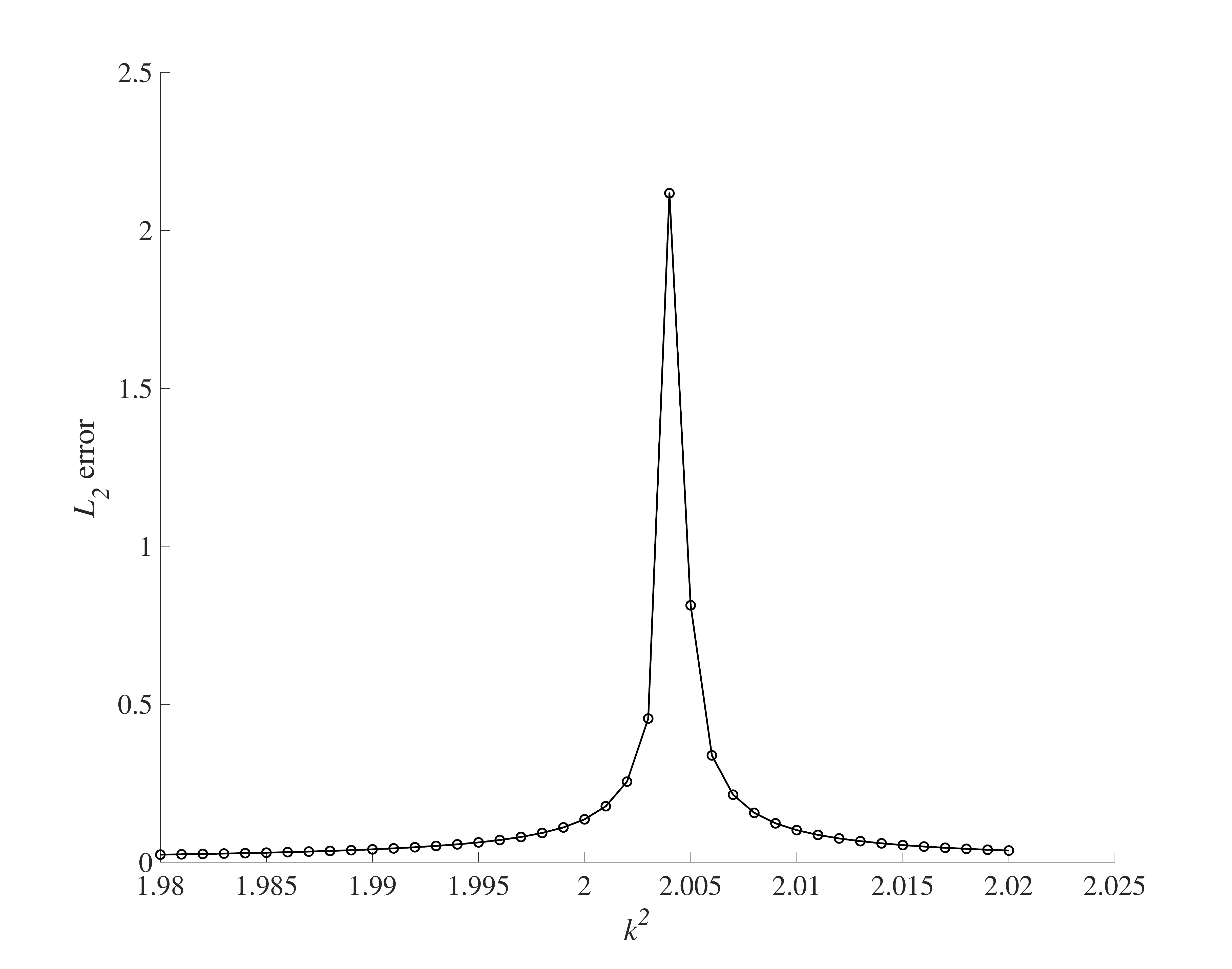}
\caption{Close-up of the error, without stabilization.}\label{fig:eigen3}
\end{figure}

\end{document}